\numberwithin{equation}{section}
\DeclareMathOperator{\E}{\mathbb{E}}
\DeclareMathOperator*{\Span}{span}
\DeclareMathOperator*{\tr}{tr}
\DeclareMathOperator*{\rank}{rank}
\renewcommand{\Pr}[2][]{\mathbb{P}_{#1} \left\{ #2 \rule{0mm}{3mm}\right\}}
\def \N {\mathbb{N}}
\def \P {\mathbb{P}}
\def \R {\mathbb{R}}
\def \LL {\mathcal{L}}
\def \e {\varepsilon}
\def \d {\delta}
\def \HS {\mathrm{HS}}
\def \tran {\mathsf{T}}
\def \HS {\mathrm{HS}}
\def \< {\langle}
\def \> {\rangle}
\def \etc {,\ldots,}
\newcommand{\norm}[1]{\left \| #1 \right \|}
\newtheorem{theorem}{Theorem}[section]
\newtheorem{proposition}[theorem]{Proposition}
\newtheorem{corollary}[theorem]{Corollary}
\newtheorem{lemma}[theorem]{Lemma}
\theoremstyle{remark}
\newtheorem{remark}[theorem]{Remark}
\title[]{Small ball probabilities for linear images of high dimensional distributions}
\author{Mark Rudelson}
\author{Roman Vershynin}
\date{\today}
\address{Department of Mathematics, University of Michigan, 530 Church St., Ann Arbor, MI 48109, U.S.A.}
\email{\{rudelson, romanv\}@umich.edu}
\thanks{Partially supported by NSF grants DMS 1161372, 1001829, 1265782 and USAF Grant FA9550-14-1-0009.}
\begin{document}

\begin{abstract}
  We study concentration properties of random vectors of the form $AX$,
  where $X = (X_1,\ldots, X_n)$ has independent coordinates and $A$ is a given matrix. 
  We show that the distribution of $AX$ is well spread in space whenever the distributions of $X_i$ 
  are well spread on the line. 
  Specifically, assume that the probability that $X_i$ 
  falls in any given interval of length $t$ is at most $p$. Then 
  the probability that $AX$ falls in any given ball of radius $t \|A\|_\HS$ is at most $(Cp)^{0.9 \, r(A)}$,
  where $r(A)$ denotes the stable rank of $A$ and $C$ is an absolute constant.
\end{abstract}

\maketitle

\section{Introduction}

Concentration properties of high dimensional distributions have been extensively studied in probability theory. 
In this paper we are interested in {\em small ball 
probabilities}, which describe the spread of a distribution in space.
Small ball probabilities have been extensively studied for stochastic processes (see \cite{Li-Shao}),
sums of independent random variables (see \cite{TV, RV ICM}) and
log-concave measures (see \cite[Chapter 5]{ABVV}).
Nevertheless, there remain surprisingly basic questions that have not been previously addressed. 

The main object of our study is a random vector $X = (X_1,\ldots, X_n)$ in $\R^n$
with independent coordinates $X_i$. Given a fixed $m \times n$ matrix $A$, 
we study the concentration properties of the random vector $AX$. We are interested in results of the 
following type:

\begin{quote} 
{\em If the distributions of $X_i$ are well spread on the line, then 
the distribution of $AX$ is well spread in space.} 
\end{quote}

Special cases of interest are {\em marginals} of $X$ which arise when $A$ is an orthogonal 
projection, and {\em sums of independent random variables} which correspond to $m=1$.
The problem of describing small ball probabilities even in these two special cases is nontrivial
and useful for applications. 
In particular, a recent interest in this problem was spurred by applications in random matrix theory;
see \cite{TV, RV ICM} for sums of random variables and \cite{RV rectangular} for higher dimensional 
marginals.

This discussion is non-trivial even for continuous distributions, and we shall start 
from this special case.

\subsection{Continuous distributions}

Our first main result is about distributions with 
independent continuous coordinates. It states that any $d$-dimensional 
marginal of such distribution has density bounded by $O(1)^d$, 
as long as the densities of the coordinates are bounded by $O(1)$.

\begin{theorem}[Densities of projections]					\label{thm: dens PX}
  Let $X= (X_1,\ldots, X_n)$ where
  $X_i$ are real-valued independent random variables.
  Assume that the densities of $X_i$ are bounded by $K$
  almost everywhere.
  Let $P$ be the orthogonal projection in $\R^n$ onto a $d$-dimensional subspace.
  Then the density of the random vector $PX$ is bounded by $(CK)^d$ almost everywhere.
\end{theorem}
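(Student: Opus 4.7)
The plan is to express $f_{PX}(y)$ as an integral of the product density $\prod_i f_i$ over the affine $(n-d)$-plane $y + E^\perp$, and then bound this integral via the geometric Brascamp--Lieb (BL) inequality followed by a short entropy estimate.

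As a first step, since $P$ is the orthogonal projection onto $E$, the standard disintegration formula reads
\[
f_{PX}(y) \;=\; \int_{E^\perp} \prod_{i=1}^n f_i(y_i+z_i)\, dz,
\]
where $dz$ is Lebesgue measure on $E^\perp$; absorbing the shift $y$ into the densities, I may take $y=0$. Choosing an orthonormal basis of $E^\perp$ and letting $\phi_i \in \R^{n-d}$ denote the rows of the resulting $n \times (n-d)$ isometric embedding $U$, the density becomes $\int_{\R^{n-d}} \prod_i f_i(\langle \phi_i, w\rangle)\, dw$, and the relation $U^{\tran} U = I_{n-d}$ translates into the tight-frame identity $\sum_i \phi_i \phi_i^{\tran} = I_{n-d}$, whence $\sum_i \|\phi_i\|^2 = n-d$.

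The heart of the proof is the application of BL. Setting $c_i = \|\phi_i\|^2 \in (0,1]$ and $u_i = \phi_i/\|\phi_i\|$ for the nondegenerate indices, the identity $\sum c_i u_i u_i^{\tran} = I_{n-d}$ is precisely the hypothesis for the Ball--Barthe geometric BL inequality. Writing $f_i(\langle \phi_i, w\rangle) = g_i(\langle u_i, w\rangle)^{c_i}$ with $g_i(s) = f_i(\|\phi_i\|s)^{1/c_i}$, BL yields the bound $\prod_i (\int g_i)^{c_i}$. Interpolating with $\|f_i\|_\infty \leq K$ and $\|f_i\|_1 = 1$ gives $\int f_i^{1/c_i} \leq K^{1/c_i - 1}$, and hence $(\int g_i)^{c_i} \leq K^{1-c_i} \|\phi_i\|^{-c_i}$. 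Taking the product over all indices (the degenerate $\phi_i = 0$ contribute constant factors $f_i(0) \leq K$ separately), the total $K$-exponent collapses to exactly $d$, producing
\[
f_{PX}(0) \;\leq\; K^d \prod_i \|\phi_i\|^{-\|\phi_i\|^2}.
\]

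To finish, rewrite the remaining product as $\exp\!\bigl(-\tfrac12 \sum_i \|\phi_i\|^2 \log \|\phi_i\|^2\bigr)$ and normalize $p_i = \|\phi_i\|^2/(n-d)$ to a probability vector. The Shannon entropy bound $-\sum p_i \log p_i \leq \log n$ together with $\log(1+x) \leq x$ shows that the exponent is at most $d/2$, so $C = \sqrt{e}$ works.

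The step I expect to be the main obstacle is choosing the right BL exponents. Uniform weights $c_i = 1$ have the wrong scaling ($\sum c_i = n \neq n-d$), so one must use the frame weights $c_i = \|\phi_i\|^2$; the interpolation this forces --- raising $f_i$ to the power $1/c_i \geq 1$ --- is exactly what makes the combined $K$-exponent equal $d$ (rather than $n$ or $n-d$) once one also accounts for the degenerate indices $e_i \in E$, of which there are at most $d$.
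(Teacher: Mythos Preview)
Your argument is correct, and it is genuinely different from (and sharper than) the paper's proof. The paper works on the Fourier side: it bounds $\|\phi_{PX}\|_{L^1(\R^d)}$ by applying Brascamp--Lieb in the \emph{image} $\R^d$ to the functions $t\mapsto |\phi_{X_j}(a_j t)|^{1/a_j^2}$ with $a_j=\|Pe_j\|_2$. Because those integrands are only integrable when $a_j$ is bounded away from $1$ (the decay $|\phi_{X_j}|^*(t)\lesssim t^{-1/2}$ forces $1/a_j^2>1$), the paper must split off the case where some $\|Pe_j\|_2$ is large and handle it by a separate tensorization-and-induction argument (their Section~6). You instead apply Brascamp--Lieb directly to the densities, integrating over the \emph{fiber} $E^\perp\cong\R^{n-d}$; your weights $c_i=\|\phi_i\|_2^2=1-\|Pe_i\|_2^2$ automatically satisfy $c_i\le 1$, so the interpolation $\int f_i^{1/c_i}\le K^{1/c_i-1}$ is always valid and no case split is needed. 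The only residual factor is $\prod_i \|\phi_i\|^{-\|\phi_i\|^2}$, which your entropy computation bounds by $e^{d/2}$, yielding the explicit constant $C=\sqrt{e}$; the paper's constant is implicit and worse. What the paper's route buys in exchange is the tensorization Lemma and the characteristic-function decay Lemma, which are of independent interest and feed into the general (non-absolutely-continuous) Corollary~1.4 via smoothing --- though your density bound would serve equally well as the input to that smoothing step.

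Two minor points worth tightening in a write-up: (i) the degenerate factor is $f_i(y_i)$ rather than $f_i(0)$, and one should note that since $e_i\in E$ the map $y\mapsto y_i$ on $E$ is a coordinate projection, so $f_i(y_i)\le K$ for a.e.\ $y\in E$; (ii) the count ``at most $d$ degenerate indices'' is true but not actually used --- the $K$-exponent $|J^c|+\sum_{i\in J}(1-c_i)=n-\sum_i c_i=d$ regardless of how many indices are degenerate.
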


Here and throughout the paper, $C, C_1, c, c_1, \ldots$ denote positive absolute constants.

\medskip

Theorem~\ref{thm: dens PX} is trivial in dimension $d=n$, since the product density
is bounded by $K^n$. A remarkable non-trivial case of Theorem~\ref{thm: dens PX}
is in dimension $d=1$, where it holds with optimal constant $C=\sqrt{2}$.
This partial case is worth to be stated separately.

\begin{theorem}[Densities of sums]					\label{thm: dens sums}
  Let $X_1,\ldots, X_n$ be real-valued independent random variables
  whose densities are bounded by $K$ almost everywhere.
  Let $a_1,\ldots,a_n$ be real numbers with $\sum_{j=1}^n a_j^2 = 1$.
  Then the density of $\sum_{j=1}^n a_j X_j$ is bounded by
  $\sqrt{2}\, K$ almost everywhere.

  Moreover, this estimate is optimal. The density $\sqrt{2}\, K$ is achieved
  by the sum $\frac{1}{\sqrt{2}} (X_1 + X_2)$ where $X_1, X_2$ are uniformly
  distributed in $[-\frac{1}{2K}, \frac{1}{2K}]$.
\end{theorem}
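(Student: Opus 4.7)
The plan is to reduce the theorem, via rearrangement and an extremality argument, to K.~Ball's cube slicing theorem on central hyperplane sections of the unit cube. Let $f_j$ denote the density of $X_j$ and $a = (a_1\etc a_n)$ the unit coefficient vector. Starting from the coarea identity
\[ f_S(y) = \int_{\R^n} \prod_{j=1}^n f_j(x_j)\, \d(a \cdot x - y)\, dx, \]
I would first absorb the shift $y$ into translations of the $f_j$'s (substituting $x_j \mapsto x_j + c_j$ with $\sum_j a_j c_j = -y$), and then apply the Brascamp--Lieb--Luttinger multilinear rearrangement inequality to the resulting integral on the hyperplane $\{a \cdot x = 0\}$. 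This shows that $f_S(y) \leq f_{S^*}(0)$ for every $y$, where $f_{S^*}$ is the density of $\sum_j a_j X_j^*$ with each $X_j^*$ having density $f_j^*$, the symmetric decreasing rearrangement of $f_j$; and $\|f_j^*\|_\infty \leq K$. This reduces the claim to the case in which each $f_j$ is symmetric decreasing on $\R$.

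Next, $f_S(0)$ is a multilinear functional of $(f_1\etc f_n)$, so its supremum over the convex set
\[ \EE_K = \bigl\{f\colon 0 \leq f \leq K,\ {\textstyle\int f = 1},\ f \text{ symmetric decreasing on } \R\bigr\} \]
is attained at extreme points of $\EE_K$. A direct check using the layer-cake representation shows that the extreme points of $\EE_K$ are exactly the uniform densities $h_t(x) = (1/t)\one_{[-t/2,\, t/2]}(x)$ with $t \geq 1/K$: both factors in a decomposition $f = (g_1+g_2)/2$ must vanish off the support of $f$, and on a constant plateau the monotonicity constraint forces $g_1 = g_2 = f$, while any $f$ with non-constant level radius function can be split by a small symmetric perturbation of that radius. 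So it suffices to prove the bound when each $X_j$ is uniform on an interval $[-t_j/2,\, t_j/2]$ with $t_j \geq 1/K$.

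In that situation, writing $b_j = a_j t_j$, the sum $S = \sum_j a_j X_j$ has the same distribution as $b \cdot Y$ where $Y$ is uniform on the standard cube $[-1/2,\, 1/2]^n$. The coarea formula gives
\[ f_S(0) = \frac{1}{|b|}\cdot \mathrm{Vol}_{n-1}\bigl([-1/2,\, 1/2]^n \cap b^\perp\bigr). \]
By K.~Ball's cube slicing theorem, this $(n-1)$-dimensional volume is at most $\sqrt{2}$. Moreover $|b|^2 = \sum_j a_j^2 t_j^2 \geq \sum_j a_j^2/K^2 = 1/K^2$, so $|b| \geq 1/K$ and $f_S(0) \leq \sqrt{2}\,K$. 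The main obstacle is precisely this external appeal to Ball's cube slicing theorem; the rearrangement and extremality reductions are essentially routine. Optimality is immediate by taking $n = 2$, $a_1 = a_2 = 1/\sqrt{2}$, and $X_1, X_2$ uniform on $[-1/(2K),\, 1/(2K)]$: the density of $X_1 + X_2$ is triangular with peak $K$ at $0$, so that of $(X_1+X_2)/\sqrt{2}$ peaks at $\sqrt{2}\,K$.
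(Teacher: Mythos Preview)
Your overall plan coincides with the paper's proof: reduce to uniforms and apply Ball's cube-slicing theorem. The paper does the reduction by citing Rogozin's theorem (Theorem~\ref{thm: rogozin}); your BLL-plus-extremality argument is an attempt to reprove Rogozin from scratch. The BLL step is fine, but the extreme-point step has a genuine gap: the extreme points of $\EE_K$ are \emph{not} only the uniforms. Take the symmetric two-step density $f = K\,\one_{[-p,p]} + c\,\one_{[-q,-p)\cup(p,q]}$ with $0<c<K$ and $2Kp+2c(q-p)=1$. If $f=\tfrac12(g_1+g_2)$ with $g_i\in\EE_K$, then saturation on $[-p,p]$ and vanishing on $|x|>q$ force $g_i=f$ there; on $(p,q)$ both $g_i$ are non-increasing with constant sum $2c$, which forces each $g_i$ to be constant on $(p,q)$, and then $\int g_i=1$ pins that constant to $c$. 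So this two-step $f$ is extreme in $\EE_K$ yet not a uniform. The error in your sketch is that the layer-cake correspondence $f\leftrightarrow r$ is not affine, so perturbing the radius function $r$ does not produce a convex decomposition of $f$; contrary to what you wrote, this reduction is not routine.

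The repair bypasses extreme points. After BLL each $f_j$ is symmetric unimodal, hence (Wintner) so is the density $\phi$ of $\sum_{j\ge2}a_jX_j$. Writing $f_S(0)=\int_\R f_1(u)\,\phi(a_1 u)\,du$ with $u\mapsto\phi(a_1 u)$ symmetric decreasing, the bathtub principle over $\{0\le f_1\le K,\ \int f_1=1\}$ puts the maximum at $f_1=K\,\one_{[-1/(2K),\,1/(2K)]}$; now iterate in $j$. This is exactly Rogozin's reduction, after which your application of Ball's theorem goes through verbatim and your argument merges with the paper's.
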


Theorem~\ref{thm: dens sums} follows immediately from a combination of two known results, 
a theorem of B.~Rogozin stated below and a theorem of K.~Ball \cite{Ball 86}.

\begin{theorem}[Rogozin \cite{Rogozin}]					\label{thm: rogozin}
  Let $Z_1,\ldots,Z_n$ be real-valued independent random variables 
  whose densities are bounded by $d_1,\ldots,d_n$ respectively. 
  Then the density of $\sum_{j=1}^n Z_j$ is uniformly bounded by
  the value of the density of $\sum_{j=1}^n U_j$ at the origin, 
  where $U_j$ are independent random variables uniformly distributed 
  in $[-\frac{1}{2d_j}, \frac{1}{2d_j}]$.
\end{theorem}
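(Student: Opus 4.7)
The plan is to use rearrangement inequalities to reduce the problem first to symmetric decreasing densities and then to the prescribed uniforms. Let $f_j$ denote the density of $Z_j$ and $f_j^*$ its symmetric decreasing rearrangement on $\R$; then $f_j^*$ is a probability density with $\|f_j^*\|_\infty = \|f_j\|_\infty \le d_j$.

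The first step reduces to symmetric decreasing factors. Writing the density of the sum as an $(n-1)$-fold integral
$$(f_1 * \cdots * f_n)(t) = \int_{\R^{n-1}} f_1(x_1) \cdots f_{n-1}(x_{n-1}) \, f_n\!\Big(t - \sum_{j=1}^{n-1} x_j\Big) dx_1 \cdots dx_{n-1},$$
I would apply the Brascamp--Lieb--Luttinger rearrangement inequality to obtain the pointwise bound $(f_1 * \cdots * f_n)(t) \le (f_1^* * \cdots * f_n^*)(t)$. Moreover $f_1^* * \cdots * f_n^*$ is itself symmetric decreasing on $\R$: this follows by induction from the elementary one-dimensional fact that the convolution of two symmetric decreasing densities is symmetric decreasing, proved via the layer-cake decomposition of each factor into indicators $\one_{[-a,a]}$ combined with the observation that $\one_{[-a,a]} * \one_{[-b,b]}$ is a symmetric trapezoid. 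Hence the supremum of $f_1^* * \cdots * f_n^*$ is attained at $0$, yielding $\sup_t (f_1 * \cdots * f_n)(t) \le (f_1^* * \cdots * f_n^*)(0)$.

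The second step reduces to uniforms by a one-at-a-time replacement. For symmetric decreasing probability densities $g_j$ with $\|g_j\|_\infty \le d_j$, fix an index $i$, let $h$ be the convolution of the remaining $g_j$ (symmetric decreasing, by the same lemma), and use commutativity of convolution together with the symmetry of $h$ to write
$$(g_1 * \cdots * g_n)(0) = \int g_i(x) \, h(-x) \, dx = \int g_i(x) \, h(x) \, dx.$$
By the bathtub principle, this integral is maximized, over probability densities $g_i$ satisfying $g_i \le d_i$, by the choice $g_i = d_i \cdot \one_{[-1/(2d_i),\, 1/(2d_i)]} = f_{U_i}$, since $h$ is symmetric decreasing and this indicator concentrates mass as close to $0$ as the sup-norm constraint permits. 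Iterating for $i = 1, \ldots, n$ replaces every $g_j$ by the corresponding $f_{U_j}$ and completes the proof.

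The main obstacle is the invocation of the Brascamp--Lieb--Luttinger inequality: the passage from arbitrary densities to symmetric decreasing ones requires this multilinear rearrangement inequality as a nontrivial black box (for $n = 3$ it reduces to the classical Riesz--Sobolev inequality). The two ingredients of the second step---closure of symmetric decreasing densities under convolution on the line, and the bathtub principle---are elementary by comparison.
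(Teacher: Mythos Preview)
The paper does not supply a proof of this theorem; it is quoted as a result of Rogozin and used as a black box in deriving Theorem~\ref{thm: dens sums}. So there is no in-paper argument to compare against.

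Your argument is essentially correct and is a standard rearrangement proof of the result, with one imprecision in the first step. The pointwise inequality $(f_1 * \cdots * f_n)(t) \le (f_1^* * \cdots * f_n^*)(t)$ you attribute to Brascamp--Lieb--Luttinger is false for general $t \ne 0$: take $n=2$ with both densities concentrated near a common nonzero point $a$, so that $(f_1*f_2)(2a)$ is large while $(f_1^* * f_2^*)(2a)$ is small. What BLL actually gives, after writing $g(y)=f_n(t-y)$ and observing $g^*=f_n^*$, is
\[
(f_1*\cdots*f_n)(t) \;\le\; \int_{\R^{n-1}} f_1^*(x_1)\cdots f_{n-1}^*(x_{n-1})\, f_n^*\!\Big(\textstyle\sum_j x_j\Big)\,dx \;=\; (f_1^**\cdots*f_n^*)(0),
\]
so the reduction to the value at the origin is immediate. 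Your separate lemma that convolutions of symmetric decreasing densities are symmetric decreasing is therefore not needed for step one, but it \emph{is} needed in step two to guarantee that the function $h$ remains symmetric decreasing after each replacement, so the bathtub argument can be iterated. That second step is clean and correct.
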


\begin{proof}[Proof of Theorem~\ref{thm: dens sums}]
For simplicity, by rescaling we can assume that $K=1$.
Rogozin's Theorem~\ref{thm: rogozin} implies that the 
density of $\sum_{j=1}^n a_j X_j$ is uniformly bounded by the density
of $\sum_{j=1}^n a_j Y_j$ at the origin, where $Y_j$ are independent random variables 
uniformly distributed in $[-\frac{1}{2}, \frac{1}{2}]$.

The random vector $Y = (Y_1,\ldots,Y_n)$ is uniformly distributed in the cube $[-\frac{1}{2},\frac{1}{2}]^n$.
Thus the density of $\sum_{j=1}^n a_j Y_j$ at the origin is the volume 
of the section of the cube by the hyperplane that contains the origin and is orthogonal to 
the vector $a = (a_1,\ldots, a_n)$.
Now, a theorem of Ball \cite{Ball 86}
states that the maximal volume of such section equals $\sqrt{2}$.
Therefore, the density of $\sum_{j=1}^n a_j X_j$ is bounded by $\sqrt{2}$.
\end{proof}

\subsection{General distributions}

Using a simple smoothing argument from \cite{BN}, 
Theorem~\ref{thm: dens PX} can be extended for general,
not necessarily continuous, distributions. The spread 
of general distributions is conveniently measured by the {\em concentration function}.
For a random vector $Z$ taking values in
a $\R^n$, the concentration function is defined as
\begin{equation}         \label{eq: concentration function}
\LL(Z,t) = \max_{u \in \R^n} \Pr{ \|Z-u\|_2 \le t }, \quad t \ge 0.
\end{equation}
Thus the concentration function controls the {\em small ball probabilities} of 
the distribution of $Z$.
The study of concentration functions of sums of independent random variables
originates from the works of L\'evy~\cite{Levy}, Kolmogorov~\cite{Kolm}, Rogozin~\cite{Rogozin 61}, 
Esseen~\cite{Esseen} and Halasz~\cite{Halasz 77}.
Recent developments in this area highlighted connections with 
Littlewood-Offord problem and applications to random matrix theory, see \cite{TV, RV ICM}.

\begin{corollary}[Concentration function of projections]			\label{cor: conc PX}
  Consider a random vector $X= (X_1,\ldots, X_n)$ where
  $X_i$ are real-valued independent random variables.
  Let $t,p \ge 0$ be such that
  $$
  \LL(X_i, t) \le p \quad \text{for all } i=1,\ldots,n.
  $$ 
  Let $P$ be an orthogonal projection in $\R^n$ onto a $d$-dimensional subspace.
  Then
  $$
  \LL(PX, t \sqrt{d}) \le (Cp)^d.
  $$
\end{corollary}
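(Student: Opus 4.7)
My plan is to reduce Corollary~\ref{cor: conc PX} to the continuous case (Theorem~\ref{thm: dens PX}) by a smoothing argument: replace $X$ by a slight perturbation $X+Z$ whose coordinates have bounded densities, apply the density bound, and then transfer the resulting small ball estimate back to $X$.

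For the smoothing step, I would introduce independent random variables $Z_1,\ldots,Z_n$, also independent of $X$, with each $Z_i$ uniform on $[-t,t]$. A direct convolution computation shows that the density of $X_i+Z_i$ at a point $x$ equals $(2t)^{-1}\P\{|X_i-x|\le t\}$, which is at most $p/(2t)$ by the hypothesis $\LL(X_i,t)\le p$. Thus Theorem~\ref{thm: dens PX} applies to $X+Z$ and yields that $P(X+Z)$ has density at most $(Cp/(2t))^d$ on the $d$-dimensional subspace $\im P$.

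Integrating this density bound over a Euclidean ball of radius $2t\sqrt{d}$ in $\im P$, and using the standard bound $\mathrm{vol}(B_2^d(r))\le (C_0 r/\sqrt{d})^d$, the volume factor $(C_0 t)^d$ cancels the density factor $(Cp/t)^d$ and produces
$$
\LL\bigl(P(X+Z),\, 2t\sqrt{d}\bigr) \le (C_1 p)^d
$$
for an absolute constant $C_1$. The smoothing scale is chosen precisely so that this cancellation happens and leaves only $(Cp)^d$-type constants behind.

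To transfer from $P(X+Z)$ back to $PX$, I would use that $PZ$ is small with constant probability: since $\E\|PZ\|_2^2 = (t^2/3)\tr(P) = t^2 d/3$, Markov's inequality gives $\P\{\|PZ\|_2 \le t\sqrt{d}\} \ge 2/3$. On this event $\|P(X+Z)-u\|_2 \le \|PX-u\|_2 + t\sqrt{d}$, so by independence of $X$ and $Z$,
$$
\P\{\|PX-u\|_2 \le t\sqrt{d}\}\cdot \tfrac{2}{3} \le \P\{\|P(X+Z)-u\|_2 \le 2t\sqrt{d}\}
$$
for every $u\in\R^n$. Taking the supremum in $u$ and combining with the previous display yields $\LL(PX, t\sqrt{d}) \le \tfrac{3}{2}(C_1 p)^d \le (Cp)^d$, as required. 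I do not anticipate any serious obstacle: the only conceptual choice is picking the smoothing scale comparable to $t$, so that both the post-smoothing density and $\|PZ\|$ are simultaneously under control; everything else is convolution and volume arithmetic.
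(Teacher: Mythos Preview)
Your proposal is correct and follows essentially the same smoothing argument as the paper: add independent uniform variables to give each coordinate a bounded density, apply Theorem~\ref{thm: dens PX}, and transfer back via a Markov bound on $\|PZ\|_2$. The only differences are cosmetic (the paper rescales to $t=1$ and uses uniforms on $[-\tfrac12,\tfrac12]$, routing the density-to-concentration step through Proposition~\ref{prop: sbp dens} rather than the explicit volume bound you wrote out).
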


This result can be regarded as a tensorization property of the concentration function. 
It will be deduced from Theorem~\ref{thm: dens PX} in Section~\ref{s: cor}.

\subsection{Anisotropic distributions}

Finally, we study concentration of anisotropic high-dimensional distributions, 
which take the form $AX$ for a fixed matrix $A$.
The key exponent that controls the behavior of the concentration function of $AX$
is the {\em stable rank} of $A$. We define it as
\begin{equation}							\label{eq: stable rank}
r(A)= \left \lfloor \frac{\norm{A}_\HS^2}{\norm{A}^2} \right \rfloor
\end{equation}
where $\|\cdot\|_\HS$ denotes the Hilbert-Schmidt norm.\footnote{This definition 
    differs slightly from the traditional definition of stable rank, 
    in which one does not take the floor function, i.e. where
    $r(A)= \norm{A}_\HS^2 / \norm{A}^2$.} 
Note that for any non-zero matrix, $1 \le r(A) \le \text{rank}(A)$.

\begin{theorem}[Concentration function of anisotropic distributions] \label{thm: conc AX}
  Consider a random vector $X= (X_1,\ldots, X_n)$ where
  $X_i$ are real-valued independent random variables.
  Let $t,p \ge 0$ be such that
  $$
  \LL(X_i, t) \le p \quad \text{for all } i=1,\ldots,n.
  $$ 
  Let $A$ be an $m \times n$ matrix and $\e \in (0,1)$. Then
  $$  
  \LL \big(AX, \, t\|A\|_\HS \big) \le (C_\e \, p)^{(1-\e) \, r(A)},
  $$
  where $C_\e$ depends only on $\e$.
\end{theorem}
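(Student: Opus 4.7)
I would deduce Theorem~\ref{thm: conc AX} from Corollary~\ref{cor: conc PX} by locating, via the singular value decomposition of $A$, an orthogonal projection of rank close to $r(A)$ under which the event $\{\norm{AX-w}\le t\norm{A}_\HS\}$ is captured at the cost of at most an $\e$-loss. Write $A=\sum_i s_iu_iv_i^\tran$ and normalize $\norm{A}=1$, so that $r(A)=\lfloor\norm{A}_\HS^2\rfloor$, and set $\xi_i=\<v_i,X\>$, $y_i=\<u_i,w\>$. The event then forces the ellipsoidal condition $\sum_i s_i^2(\xi_i-y_i/s_i)^2\le t^2\norm{A}_\HS^2$. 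I would group indices dyadically, $I_k=\{i:s_i\in(2^{-k-1},2^{-k}]\}$ with $d_k=|I_k|$ and $V_k=\Span(v_i:i\in I_k)$; the stable rank decomposes as $\sum_k d_k 4^{-k}\asymp r(A)$, and on the event the orthogonal projection $P_{V_k}X$ is confined to a ball of radius $O(2^k t\norm{A}_\HS)$ about a deterministic point.

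The main steps would be: (i) apply the smoothing argument behind Corollary~\ref{cor: conc PX} to pass to bounded coordinate densities $\lesssim p/t$; (ii) truncate singular values below a threshold $\gamma(\e)$ so that the discarded Hilbert--Schmidt mass is at most $(\e/2)\norm{A}_\HS^2$, leaving only $O(\log(1/\gamma))$ active dyadic scales; (iii) pigeonhole on these scales produces a dominant block $k^*$ with $d_{k^*}4^{-k^*}\ge c\,r(A)/\log(1/\gamma)$; (iv) invoke Corollary~\ref{cor: conc PX} on the $d_{k^*}$-dimensional projection $P_{V_{k^*}}$ and cover the ball of radius $O(2^{k^*}t\norm{A}_\HS)$ by balls of radius $t\sqrt{d_{k^*}}$, absorbing the covering overhead into $C_\e$; (v) balance parameters so the exponent attains $(1-\e)r(A)$.

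The \textbf{main obstacle} is that the per-scale events ``$P_{V_k}X$ lies near $y^{(k)}$'' are correlated across $k$: the coordinates of $X$ are independent in the standard basis, not in the SVD basis $\{v_i\}$, so no tensorization over dyadic blocks is available. One is thus forced to isolate a single block, but for slowly decaying spectra (e.g.\ $s_i\asymp i^{-1/2}$) no single block carries a $(1-\e)$-fraction of the stable rank. Promoting the exponent from $d_{k^*}$ up to $(1-\e)r(A)$ therefore demands a subtler argument---perhaps an iterative extraction on successive \emph{disjoint coordinate} subsets of $X$ (thereby restoring independence by conditioning on the $X_j$'s rather than on projections), or a direct ellipsoid-density estimate for the rotated vector $V^\tran X$ that invokes Theorem~\ref{thm: dens PX} and compares against the ellipsoid volume, then controls the factor $1/\prod_i s_i$ via the dyadic truncation. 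Confining the resulting logarithmic and covering overheads entirely to the constant $C_\e$, while preserving the exponent $(1-\e)r(A)$, is the technical crux.
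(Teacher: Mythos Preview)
Your diagnosis of the ``main obstacle'' is a red herring, and it leads you away from the actual argument. Correlation across dyadic blocks does not matter, because one never needs tensorization over blocks: a \emph{union bound} suffices. Concretely, if $\sum_l a_l \le \sum_l b_l$ then $a_l \le b_l$ for some $l$, so on the event $\{\|AX-w\|_2 \le t\|A\|_\HS\}$ at least one projection $P_l X$ is trapped in a ball whose radius matches the Corollary~\ref{cor: conc PX} scaling; summing the resulting probabilities over $l$ requires no independence whatsoever. The genuine obstruction is different: with your decomposition by singular-value \emph{magnitude}, a block $I_k$ can have $d_k=1$, and then the corresponding term in the union bound is only $(Cp)^{1}$, which swamps the target $(Cp)^{(1-\e)r(A)}$. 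Your pigeonhole step inherits exactly this defect, and you correctly note that for spectra like $s_i\asymp i^{-1/2}$ no single block carries a $(1-\e)$-fraction of the stable rank; the elaborate fixes you sketch at the end are not needed.

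The paper resolves this by grouping dyadically in \emph{index position} rather than in singular-value magnitude: with $r=r(A)$, set $P_0=\sum_{j\le r} v_jv_j^\tran$ and $P_l=\sum_{2^{l-1}r<j\le 2^lr} v_jv_j^\tran$, so that every block has rank at least $r$. Monotonicity of the $s_j$'s gives $\|AX\|_2^2 \ge \sum_{l\ge 0} s_{2^lr}^2\,\|P_lX\|_2^2$ and $S_r(A)^2:=\sum_{j>r}s_j^2 \le \sum_{l\ge 0} 2^lr\, s_{2^lr}^2$; term-by-term comparison followed by a union bound and Corollary~\ref{cor: conc PX} yields $\LL(AX,\,MtS_r(A))\le \sum_{l\ge 0}(C_0Mp)^{\rank P_l}$, and since the exponents are $r,r,2r,4r,\ldots$ the series sums to $(CMp)^{r}$. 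This gives exponent exactly $r(A)$ but at the radius $S_r(A)=\sqrt{\d(A)}\,\|A\|_\HS$, so after rescaling the constant becomes $C/\sqrt{\d(A)}$. The final passage to $(C_\e p)^{(1-\e)r(A)}$ is a two-case split: if $\d(A)\ge \e/2$ the above already gives $C_\e=C/\sqrt{\e}$; if $\d(A)<\e/2$ then enough singular values exceed $\tfrac12\|A\|$ that the single projection onto the top $k+1\ge\lceil(1-\e)r(A)\rceil$ right singular directions suffices, and one applies Corollary~\ref{cor: conc PX} once with no dyadic machinery at all.
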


A more precise version of this result is Theorem~\ref{thm: conc AX precise} and 
Corollary~\ref{cor: small ball} below. 
It will be deduced from Corollary~\ref{cor: conc PX} by replacing
$A$ by a dyadic sum of spectral projections. 

\begin{remark}[Scaling]
  To understand Theorem~\ref{thm: conc AX} better, 
  note that $\E \|AX\|_2^2 = \|A\|_\HS^2$
  if all $X_i$ have zero means and unit variances.
  This explains the scaling factor $\|A\|_\HS$ in Theorem~\ref{thm: conc AX}.
  Further, if $A$ is an orthogonal projection of rank $d$, then $\|A\|_\HS = \sqrt{d}$ 
  and $r(A) = d$, which recovers Corollary~\ref{cor: conc PX} in this case 
  up to $\e$ in the exponent. Moreover,  Theorem~\ref{thm: conc AX precise}
  and Corollary~\ref{cor: small ball-2} below will allow precise recovery, without any loss of $\e$. 
\end{remark}

\begin{remark}[Continuous distributions]
  In the particular case of continuous distributions, Theorem~\ref{thm: conc AX}
  states the following.
  Suppose the densities of $X_i$ are bounded by $K$.
  Then obviously $\LL(X_i, t) \le Kt$ for any $t \ge 0$, so Theorem~\ref{thm: conc AX} yields
  \begin{equation}				\label{eq: dens linear images}
  \LL(AX, t\|A\|_\HS) \le \left( C_\e \, t \right)^{(1-\e) \, r(A)}, \quad t \ge 0.
  \end{equation}
  A similar inequality was proved by Paouris \cite{Paouris} for random vectors $X$
  which satisfy three conditions: (a) $X$ is isotropic, i.e. all one-dimensional marginals of $X$
  have unit variance; (b) the distribution of $X$ is log-concave; 
  (c) all one-dimensional marginals are uniformly sub-gaussian.\footnote{Recall 
    that a random variable $Z$ is sub-gaussian if 
    $\P \{|Z|>t\} \le 2 \exp(-t^2/M^2)$ for all $t \ge 0$. The smallest $M \ge 0$ here can be taken as 
    a definition of the sub-gaussian norm of $Z$; see \cite{V tutorial}.}
  The inequality of Paouris states in this case that
  \begin{equation}				\label{eq: Paouris}
  \LL(AX, t\|A\|_\HS) \le \big(Ct \big)^{c \, r(A)}, \quad t \ge 0.
  \end{equation}
  Here $C$ is an absolute constant and $c \in (0,1)$ depends only on the 
  bound on the sub-gaussian norms.
  The distributions for which Paouris' inequality \eqref{eq: Paouris} applies 
  are not required to have independent coordinates. 
  On the other hand, the log-concavity assumption 
  for \eqref{eq: Paouris} is much stronger than 
  a uniform bound on the coordinate densities in \eqref{eq: dens linear images}.
\end{remark}

\begin{remark}[Large deviations]
  It is worthwhile to state here a related large deviation bound for $AX$ from \cite{RV HW}.
  If $X_i$ are independent, uniformly sub-gaussian, 
  and have zero means and unit variances, then  
  $$
  \Pr{ \big| \|AX\|_2 - \|A\|_\HS \big| \ge t \|A\|_\HS } \le 2 e^{-c t^2 r(A)}, \quad t \ge 0.
  $$
  Here $c>0$ depends only on the bound on the sub-gaussian norms of $X_i$.
\end{remark}

\subsection{The method}

Let us outline the proof of the key Theorem~\ref{thm: dens PX}, 
which implies all other results in this paper. 

A natural strategy would be to extend to higher dimensions
the simple one-dimensional argument leading to Theorem~\ref{thm: dens sums}, 
which was a combination of Ball's and Rogozin's theorems.
A higher-dimensional version of Ball's theorem is indeed available \cite{Ball 89};
it states that the maximal volume of a section of the cube by a subspace
of codimension $d$ is $(\sqrt{2})^d$.
However, we are unaware of any higher-dimensional versions 
of Rogozin's theorem \cite{Rogozin}.

An alternative approach to the special case of Theorem~\ref{thm: dens PX}
in dimension $d=1$ (and, as a consequence, to Corollary~\ref{cor: conc PX} in dimension $d=1$)
was developed in an unpublished
manuscript of Ball and Nazarov \cite{BN}. Although it does not achieve the optimal
constant $\sqrt{2}$ that appears in Theorem~\ref{thm: dens sums},
this approach avoids the delicate combinatorial arguments
that appear in the proof of Rogozin's theorem. 
The method of Ball and Nazarov is Fourier-theoretic; 
its crucial steps go back to Halasz~\cite{Halasz 75, Halasz 77} 
and Ball~\cite{Ball 86}. 

\smallskip

In this paper, we prove Theorem~\ref{thm: dens PX}
by generalizing the method of Ball and Nazarov \cite{BN} to higher dimensions
using Brascamp-Lieb inequality.
For educational purposes, we will start by presenting a version of 
Ball-Nazarov's argument in dimension $d=1$ in Sections~\ref{s: char functions} and
\ref{s: dim one}. The higher-dimensional argument will be presented in
Sections~\ref{s: all small}--\ref{s: proof}.

There turned out to be an unexpected difference between dimension $d=1$ 
and higher dimensions, which presents us with an an extra challenge.
The one-dimensional method works well under assumption that all coefficients $a_i$ are small,
e.g. $|a_i| \le 1/2$. The opposite case where there is a large coefficient $a_{i_0}$, is trivial;
it can be treated by conditioning on all $X_i$ except $X_{i_0}$. 

In higher dimensions, this latter case is no longer trivial. It corresponds
to the situation where some $\|Pe_{i_0}\|_2$ is large (here
$e_i$ denote the coordinate basis vectors).
The power of one random variable $X_{i_0}$ is not enough to yield
Theorem~\ref{thm: dens PX}; such argument would lead to a weaker bound $(C K \sqrt{d})^d$
instead of $(CK)^d$.

In Section~\ref{s: some large} we develop an alternative way to remove
the terms with large $\|Pe_i\|_2$ from the sum. It is based on a careful
tensorization argument for small ball probabilities.

\section{Deduction of Corollary~\ref{cor: conc PX} from Theorem~\ref{thm: dens PX}}  \label{s: cor}

We begin by recording a couple of elementary properties of concentration functions.

\begin{proposition}[Regularity of concentration function]			\label{prop: regularity}
  Let $Z$ be a random variable taking values in a $d$-dimensional subspace of $\R^n$.
  Then for every $M \ge 1$ and $t \ge 0$, we have
  $$
  \LL(Z,t) \le \LL(Z, Mt) \le (3M)^d \cdot \LL(Z,t).
  $$
\end{proposition}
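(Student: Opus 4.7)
The plan is to verify the two inequalities separately. The left inequality $\LL(Z,t) \le \LL(Z, Mt)$ is trivial: since $M \ge 1$, we have $B(u, t) \subseteq B(u, Mt)$ for every $u \in \R^n$, so the probabilities defining the two concentration functions are monotone in the radius, and taking the maximum over $u$ gives the bound.

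For the inequality $\LL(Z, Mt) \le (3M)^d \cdot \LL(Z, t)$, the strategy is a standard volumetric covering argument carried out inside the $d$-dimensional subspace in which $Z$ is supported. Fix an arbitrary $u \in \R^n$ and let $V$ denote that subspace. Decompose $u = u' + u''$ with $u' \in V$ and $u'' \in V^\perp$. Since $Z \in V$ almost surely, the Pythagorean identity gives $\|Z - u\|_2^2 = \|Z - u'\|_2^2 + \|u''\|_2^2$, so the event $\{\|Z - u\|_2 \le Mt\}$ is contained in $\{Z \in B_V(u', Mt)\}$, where $B_V(w, r)$ denotes the closed $d$-dimensional Euclidean ball in $V$ centered at $w$. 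Next I would cover $B_V(u', Mt)$ by balls $B_V(v_1, t), \ldots, B_V(v_N, t)$ of radius $t$. A standard volume packing argument (packing disjoint balls of radius $t/2$ inside an enlarged ball of radius $Mt + t/2$) shows that one can take $N \le (1 + 2M)^d \le (3M)^d$ when $M \ge 1$. Applying the union bound to this cover and then taking the supremum over $u \in \R^n$ finishes the proof.

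No serious obstacle arises; the argument is a routine regularity property of concentration functions. The only point requiring slight care is that the center $u$ of the original ball need not lie in $V$, which is why the preliminary orthogonal decomposition is needed before invoking the standard $d$-dimensional covering number estimate.
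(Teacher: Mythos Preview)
Your proof is correct and follows the same approach as the paper, which simply notes that the lower bound is trivial and that the upper bound follows from covering a ball of radius $Mt$ in $\R^d$ by $(3M)^d$ balls of radius $t$. Your version just supplies more detail, in particular the orthogonal decomposition to handle centers $u \notin V$ and the explicit packing bound $N \le (1+2M)^d \le (3M)^d$.
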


\begin{proof}
The lower estimate is trivial. The upper estimate follows once we recall that 
a ball of radius $Mt$ in $\R^d$ can be covered by $(3M)^d$ balls of radius $t$.
\end{proof}

Throughout this paper, it will be convenient to work with the following equivalent definition of density.
For a random vector $Z$ taking values in a $d$-dimensional subspace $E$ of $\R^n$,
the density can be defined as
\begin{equation}         \label{eq: density}
f_Z(u) = \limsup_{t \to 0_+} \frac{1}{|B(t)|} \; \Pr{ \|Z - u\|_2 \le t }, \quad u \in E,
\end{equation} 
where $|B(t)|$ denotes the volume of a Euclidean ball with radius $t$ in $\R^d$.
Lebesgue differentiation theorem states that for random variable $Z$ with absolutely continuous distribution,
$f_Z(u)$ equals the actual density of $Z$
almost everywhere. 

The following elementary observation connects densities and concentration functions.

\begin{proposition}[Concentration function and densities]		\label{prop: sbp dens}
  Let $Z$ be a random variable taking values in a $d$-dimensional subspace of $\R^n$.
  Then the following assertions are equivalent:
  \begin{enumerate}[(i)]
    \item The density of $Z$ is bounded by $K^d$ almost everywhere;
    \item The concentration function of $Z$ satisfies
      $$
      \LL(Z, t \sqrt{d}) \le (Mt)^d \quad \text{for all } t \ge 0.
      $$
  \end{enumerate}
  Here $K$ and $M$ depend only on each other. In the implication (i) $\Rightarrow$ (ii),
  we have $M \le C K$ where $C$ is an absolute constant.
  In the implication (ii) $\Rightarrow$ (i), we have
  $K \le M$.
\end{proposition}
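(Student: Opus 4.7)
The plan is to argue both implications directly from the definition of density in \eqref{eq: density} together with the explicit formula $|B(r)| = \omega_d r^d$, where $\omega_d = \pi^{d/2}/\Gamma(d/2+1)$ is the volume of the unit ball in $\R^d$. The key quantitative fact is that $\omega_d d^{d/2}$ is comparable to $C^d$ in both directions: a lower bound comes from the trivial observation that the cube $[-1/2,1/2]^d$ is contained in the Euclidean ball of radius $\sqrt{d}/2$, so $\omega_d (\sqrt{d}/2)^d \ge 1$, i.e., $\omega_d d^{d/2} \ge 2^d$; an upper bound $\omega_d d^{d/2} \le C^d$ follows from Stirling's formula applied to $\Gamma(d/2+1)$.

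For the implication (i)$\Rightarrow$(ii), I would integrate the density over a ball. Working in the $d$-dimensional subspace carrying $Z$, for any $u$ the event $\{\|Z-u\|_2 \le t\sqrt{d}\}$ corresponds to $Z$ lying in a $d$-dimensional ball of radius $t\sqrt{d}$, so
$$
\Pr{\|Z-u\|_2 \le t\sqrt{d}} \;\le\; K^d \cdot |B(t\sqrt{d})| \;=\; K^d \omega_d d^{d/2}\, t^d \;\le\; (CK\, t)^d,
$$
which is exactly (ii) with $M \le CK$.

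For the implication (ii)$\Rightarrow$(i), I would set $s = t\sqrt{d}$ in (ii) to obtain $\Pr{\|Z-u\|_2 \le s} \le (Ms/\sqrt{d})^d$ for every $u$ in the subspace and every $s \ge 0$. Substituting into the definition \eqref{eq: density} of density and dividing by $|B(s)| = \omega_d s^d$ gives
$$
f_Z(u) \;\le\; \limsup_{s \to 0_+} \frac{(Ms/\sqrt{d})^d}{\omega_d s^d} \;=\; \frac{M^d}{\omega_d d^{d/2}} \;\le\; M^d,
$$
using $\omega_d d^{d/2} \ge 1$ (in fact $\ge 2^d$) from the cube-in-ball inclusion above. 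This holds pointwise for every $u$, so in particular the true density is bounded by $M^d$ almost everywhere, giving $K \le M$.

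There is no real obstacle; the only delicate points are the two volumetric inequalities for $\omega_d d^{d/2}$, both of which are elementary. I would write out the cube-in-ball inclusion explicitly (since it gives a clean, constant-free lower bound) and appeal to Stirling only for the upper bound used in (i)$\Rightarrow$(ii).
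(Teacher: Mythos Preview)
Your proposal is correct and follows essentially the same route as the paper. The paper's proof is a one-liner invoking the known bound $t^d \le |B(t\sqrt{d})| \le (Ct)^d$ (citing Pisier); you derive both volumetric inequalities yourself---the lower one via the cube-in-ball inclusion, the upper one via Stirling---and then apply them in exactly the same way to pass between density and concentration function.
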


This proposition follows from the known bound
$t^d \le |B(t \sqrt{d})| \le (Ct)^d$ (see e.g. formula (1.18) in \cite{Pisier}). \qed 

\medskip

Now we are ready to deduce Corollary~\ref{cor: conc PX} from Theorem~\ref{thm: dens PX}.
The proof is a higher-dimensional version of the smoothing argument of Ball and Nazarov \cite{BN}.

\begin{proof}[Proof of Corollary~\ref{cor: conc PX}]
We can assume by approximation that $t>0$; then by rescaling (replacing $X$ with $X/t$) 
we can assume that $t=1$. Furthermore, 
translating $X$ if necessary, we reduce the problem to bounding
$\P\{ \|PX\|_2 \le \sqrt{d} \}$.
Consider independent random variables $Y_i$ uniformly distributed in $[-1/2,1/2]$,
which are also jointly independent of $X$.
We are seeking to replace $X$ by $X' := X+Y$.
By triangle inequality and independence, we have
\begin{align*}
\Pr{ \|PX'\|_2 \le 2\sqrt{d} }
&\ge \Pr{ \|PX\|_2 \le \sqrt{d} \text{ and } \|PY\|_2 \le \sqrt{d} }  \\
&= \Pr{ \|PX\|_2 \le \sqrt{d} } \; \Pr{ \|PY\|_2 \le \sqrt{d} }.
\end{align*}
An easy computation yields $\E \|PY\|_2^2 = d/12$, so
Markov's inequality implies that $\P\{ \|PY\|_2 \le \sqrt{d} \} \ge 11/12$.
It follows that
\begin{equation}         \label{eq: via PX'}
\Pr{ \|PX\|_2 \le \sqrt{d} } \le \frac{12}{11} \; \Pr{ \|PX'\|_2 \le 2\sqrt{d} }.
\end{equation}
Note that $X' = X+Y$ has independent coordinates whose densities
can be computed as follows:
$$
f_{X'_i}(u) = \Pr{ |X_i-u| \le 1/2 }
\le \LL(X_i,1/2).
$$
Applying Theorem~\ref{thm: dens PX}, we find that the density of
$PX'$ is bounded by $(CL)^d$, where $L = \max_i \LL(X_i,1/2)$.
Then Proposition~\ref{prop: sbp dens} yields that
$$
\Pr{ \|PX'\|_2 \le 2\sqrt{d} } \le \LL(PX', 2\sqrt{d}) \le (C_1 L)^d.
$$
Substituting this into \eqref{eq: via PX'}, we complete the proof.
\end{proof}

\begin{remark}[Flexible scaling in Corollary~\ref{cor: conc PX}]		\label{rem: flexible scaling}
 Using regularity of concentration function described in Proposition~\ref{prop: regularity},
 one can state the conclusion of Corollary~\ref{cor: conc PX} in a more flexible way:
 $$
 \LL(PX, Mt \sqrt{d}) \le (CMp)^d, \quad M \ge 1.
 $$
 We will use this observation later.  
\end{remark}

\section{Decay of characteristic functions}    \label{s: char functions}

We will now begin preparing our way for the proof of Theorem~\ref{thm: dens PX}. 
Our argument will use the following tail bound for the characteristic function 
$$
\phi_X(t) = \E e^{itX}
$$
of a random variable $X$ with bounded density. The estimate and its proof below are essentially 
due to Ball and Nazarov \cite{BN}. The reader can refer to \cite[Chapter 3]{LL} for 
the definition and basic properties of the decreasing rearrangements of functions. 

\begin{lemma}[Decay of characteristic functions]				\label{eq: char function decay}
  Let $X$ be a random variable whose density is bounded by $K$.
  Then the non-increasing rearrangement of the characteristic function of $X$ satisfies
  $$
  |\phi_X|^*(t) \le
  \begin{cases}
    1 - c(t/K)^2, & 0 < t < 2 \pi K \\
    \sqrt{2 \pi K/t}, & t \ge 2 \pi K.
  \end{cases}
  $$
\end{lemma}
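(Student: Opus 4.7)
My plan is to follow the Fourier-theoretic approach of Ball and Nazarov. It rests on two ingredients. The first is a \emph{subadditivity property}: the function $\Psi(s) := \sqrt{1 - |\phi_X(s)|^2}$ is subadditive on $\R$. With $X'$ an independent copy of $X$, the identity
\[
1 - |\phi_X(s)|^2 = \E\bigl[1 - \cos(s(X-X'))\bigr] = 2\,\E\sin^2\bigl(s(X-X')/2\bigr)
\]
gives $\Psi(s) = \sqrt{2}\,\|\sin(s(X-X')/2)\|_{L^2(\P)}$. Combining the pointwise trig inequality $|\sin(\alpha+\beta)| \le |\sin\alpha| + |\sin\beta|$ with the triangle inequality for the $L^2$-norm then yields $\Psi(s_1+s_2) \le \Psi(s_1) + \Psi(s_2)$. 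The second ingredient is the \emph{Plancherel bound}
\[
\int_{\R} |\phi_X(s)|^2 \, ds = 2\pi \int_{\R} f_X(x)^2 \, dx \le 2\pi K,
\]
which follows from $f_X \le K$ a.e.\ and $\int f_X = 1$.

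The tail case $t \ge 2\pi K$ follows from Plancherel alone. Using monotonicity and equimeasurability of the decreasing rearrangement,
\[
t \cdot \bigl(|\phi_X|^*(t)\bigr)^2 \le \int_0^t \bigl(|\phi_X|^*(s)\bigr)^2 \, ds \le \int_{\R} |\phi_X(s)|^2 \, ds \le 2\pi K,
\]
so $|\phi_X|^*(t) \le \sqrt{2\pi K/t}$, as claimed.

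The peak case $0 < t < 2\pi K$ is where subadditivity enters. Fix a small absolute constant $c > 0$ to be determined; set $\lambda = 1 - c(t/K)^2$ and $\delta = \sqrt{1-\lambda^2} \le \sqrt{2c}\,(t/K)$. Writing $E = \{s \in \R : |\phi_X(s)| > \lambda\} = \{s : \Psi(s) < \delta\}$, it suffices to show $|E| \le t$. By subadditivity, the $n$-fold Minkowski sum satisfies $nE \subset \{\Psi < n\delta\}$ for every integer $n \ge 1$, while the one-dimensional Brunn-Minkowski inequality gives $|nE| \ge n|E|$. Choose $n$ to be the largest integer with $n\delta \le 1/\sqrt{2}$; then $|\phi_X|^2 > 1/2$ on $nE$, so the Plancherel bound forces $|nE| \le 4\pi K$. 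Combining, $n|E| \le 4\pi K$, hence $|E| \le C K\delta \le C'\sqrt{c}\,t$ for absolute $C, C'$. Taking $c$ small enough gives $|E| \le t$, as required.

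The main obstacle is the subadditivity step itself: without it, the $L^2$ bound alone only yields $|\phi_X|^*(t) \le \sqrt{2\pi K/t}$, which is useless in the regime $t \ll K$ and says nothing about the decay of $|\phi_X|^*$ near its maximum. It is precisely the interplay between the subadditivity of $\sqrt{1-|\phi_X|^2}$ and the one-dimensional Brunn-Minkowski inequality that converts the crude $L^2$ bound into the sharp quadratic decay $1 - c(t/K)^2$.
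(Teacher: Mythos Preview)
Your proof is correct and follows the same Ball--Nazarov/Hal\'asz strategy as the paper: Plancherel for the tail, symmetrization to write $1-|\phi_X|^2$ in terms of $\sin^2$, and a sublinearity property of $\Psi=\sqrt{1-|\phi_X|^2}$ to push the crude level-set bound down to small levels. The only cosmetic difference is in the last step: the paper uses the special case $\Psi(ms)\le m\Psi(s)$ for integer $m$ (from $|\sin(mx)|\le m|\sin x|$) together with a \emph{dilation} argument, namely $\{\psi\le 1/(4m^2)\}\subset m^{-1}\{\psi\le 1/4\}$, which directly gives the factor $1/m$ in measure without invoking Brunn--Minkowski; you instead use full subadditivity plus the Minkowski-sum inequality $|nE|\ge n|E|$. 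Both routes yield the same bound, and yours is equally valid.
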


\begin{proof}
The estimate for large $t$ will follow from Plancherel's identity.
The estimate for small $t$ will be based on a regularity argument
going back to Halasz \cite{Halasz 75}.

{\bf 1. Plancherel. }
By replacing $X$ with $KX$ we can assume that $K=1$.  
Let $f_X(\cdot)$ denote the density of $X$.
Thus $\phi_X(t) = \int_{-\infty}^\infty f_X(x) e^{itx} \, dx = \widehat{f_X}(-t/2\pi)$, 
according to the standard definition of the Fourier transform
\begin{equation}							\label{eq: Fourier}
\widehat{f}(t) = \int_{-\infty}^\infty f(x) e^{-2 \pi i t x} \, dx.
\end{equation}
By Plancherel's identity and using that $\|p_X\|_{L^1} = 1$ and $\|p_X\|_{L^\infty} \le K =1$, 
we obtain
$$
\|\phi_X\|_{L^2} = \sqrt{2\pi} \|p_X\|_{L^2}
\le \sqrt{2 \pi \|p_X\|_{L^1} \, \|p_X\|_{L^\infty}}
\le \sqrt{2\pi}.
$$
Chebychev's inequality then yields
\begin{equation}         \label{eq: phi t large}
|\phi_X|^*(t) \le \sqrt{\frac{2\pi}{t}}, \quad t > 0.
\end{equation}
This proves the second part of the claimed estimate. It remains to prove the first part.

\medskip

{\bf 2. Symmetrization.}
Let $X'$ denote an independent copy of $X$. Then
\begin{align*}
|\phi_X(t)|^2
&= \E e^{itX} \, \E \overline{e^{itX}}
= \E e^{itX} \, \E e^{-itX'}
= \E e^{it(X-X')} \\
&= \phi_{\tilde{X}}(t),
\quad \text{where } \tilde{X} := X-X'.
\end{align*}
Further, by symmetry of the distribution of $\tilde{X}$, we have
\[
\phi_{\tilde{X}}(t) = \E \cos(t |X|)
= 1 - 2 \E \sin^2 \big( \frac{1}{2} t |\tilde{X}| \big)
=: 1 - \psi(t).
\]
We are going to prove a bound of the form
\begin{equation}         \label{eq: mu psi}
\lambda\{ \tau: \psi(\tau) \le s^2 \} \le Cs, \quad 0 < s \le 1/2
\end{equation}
where $\lambda$ denotes the Lebesgue measure on $\R$.
Comnined with the identity $|\phi_X(t)|^2 = 1 - \psi(t)$, this bound would imply
$$
|\phi_X|^*(Cs) \le \sqrt{1-s^2} \le 1 - s^2/2, \quad 0 < s \le 1/2.
$$
Substituting $t=Cs$, we would obtain the desired estimate
$$
|\phi_X|^*(t) \le 1 - s^2/2C^2, \quad 0 < s \le C/2,
$$
which would conclude the proof (provided $C$ is chosen large enough so that $C/2 \ge 2\pi$).

{\bf 3. Regularity.}
First we observe that \eqref{eq: mu psi} holds for some fixed constant value of $s$.
This follows from the identity $|\phi_X(\tau)|^2 = 1 - \psi(\tau)$ and inequality \eqref{eq: phi t large}:
\begin{equation}         \label{eq: fixed t}
\lambda \big\{ \tau: \psi(\tau) \le \frac{1}{4} \big\}
= \lambda \{ \tau: |\phi_X(\tau)| \ge \sqrt{3/4} \}
\le 8\pi/3 \le 9.
\end{equation}
Next, the definition of $\psi(\cdot)$ and
the inequality $|\sin(mx)| \le m|\sin x|$ valid for $x \in \R$ and $m \in \N$ imply
that
$$
\psi(mt) \le m^2 \psi(t), \quad t > 0, \; m \in \N.
$$
Therefore
\begin{equation}         \label{eq: t discrete}
\lambda \big\{ \tau: \psi(\tau) \le \frac{1}{4m^2} \big\}
\le \lambda \big\{ \tau: \psi(m\tau) \le \frac{1}{4} \big\}
= \frac{1}{m} \, \lambda \big\{ \tau: \psi(\tau) \le \frac{1}{4} \big\}
\le \frac{9}{m},
\end{equation}
where in the last step we used \eqref{eq: fixed t}.
This establishes \eqref{eq: mu psi} for the discrete set of values $t = \frac{1}{2m}$, $m \in \N$.
We can extend this to arbitrary $t>0$ in a standard way,
by applying \eqref{eq: t discrete} for $m \in \N$ such that
$t \in (\frac{1}{4m}, \frac{1}{2m}]$.
This proves \eqref{eq: mu psi} and completes the proof of Lemma~\ref{eq: char function decay}.
\end{proof}

\section{Theorem~\ref{thm: dens PX} in dimension one. Densities of sums.}			\label{s: dim one}

Now we are going to give a ``soft'' proof of a version of Theorem~\ref{thm: dens sums}
due to Ball and Nazarov \cite{BN}. Their argument establishes Theorem~\ref{thm: dens PX} 
in dimension $d=1$. Let us state this result separately.

\begin{theorem}[Densities of sums]		\label{thm: dim one}
  Let $X_1,\ldots, X_n$ be real-valued independent random variables
  whose densities are bounded by $K$ almost everywhere.
  Let $a_1,\ldots,a_n$ be real numbers with $\sum_{j=1}^n a_j^2 = 1$.
  Then the density of $\sum_{j=1}^n a_j X_j$ is bounded by
  $CK$ almost everywhere.
\end{theorem}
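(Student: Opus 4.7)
The plan is to control the density of $S := \sum_{j=1}^n a_j X_j$ through its characteristic function: by Fourier inversion, $\|f_S\|_\infty \le \tfrac{1}{2\pi} \|\phi_S\|_{L^1}$, where $\phi_S(t) = \prod_j \phi_{X_j}(a_j t)$. I would split into two regimes. If some coefficient satisfies $|a_{i_0}| \ge 1/2$, then conditioning on $(X_j)_{j \ne i_0}$ turns $S$ into an affine function of $X_{i_0}$ with slope $a_{i_0}$, whose conditional density is at most $K/|a_{i_0}| \le 2K$; averaging in the conditioning passes this bound to the unconditional density of $S$. The substantive case is the complementary one where $\max_j |a_j| \le 1/2$.

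Under this assumption, set $p_j := 1/a_j^2 \ge 4$. The normalization $\sum a_j^2 = 1$ is exactly the Hölder conjugacy relation $\sum 1/p_j = 1$, so applying Hölder's inequality to the product representation of $\phi_S$ gives
\[
\|\phi_S\|_{L^1} \le \prod_{j=1}^n \Bigl( \int_\R |\phi_{X_j}(a_j t)|^{p_j}\, dt \Bigr)^{1/p_j} = \prod_{j=1}^n \Bigl( \frac{1}{|a_j|} \int_\R |\phi_{X_j}(s)|^{p_j}\, ds \Bigr)^{a_j^2}.
\]
To estimate each inner integral I would pass to the decreasing rearrangement, $\int_\R |\phi_{X_j}|^{p_j} = \int_0^\infty (|\phi_{X_j}|^*)^{p_j}\, dt$, and split the domain at $t = 2\pi K$. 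On $[0,2\pi K]$ the quadratic bound of Lemma~\ref{eq: char function decay} together with the substitution $u = t\sqrt{c}/K$ reduces the integral to $(K/\sqrt{c}) \int_0^1 (1-u^2)^{p_j}\, du$, which is $O(K/\sqrt{p_j})$ by a standard Beta-integral estimate. On $[2\pi K, \infty)$ the tail bound gives $(|\phi_{X_j}|^*)^{p_j} \le (2\pi K/t)^{p_j/2}$, integrable because $p_j \ge 4 > 2$, and contributes a term of order $K/(p_j - 2)$, which is again $O(K/\sqrt{p_j})$ for $p_j \ge 4$. Combining, $\int_\R |\phi_{X_j}|^{p_j} \le CK/\sqrt{p_j} = CK|a_j|$, so each factor above is at most $(CK)^{a_j^2}$, and the product reduces to $(CK)^{\sum a_j^2} = CK$, yielding $\|f_S\|_\infty \le CK$.

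The principal obstacle is that this Fourier--Hölder pipeline is genuinely tethered to the small-coefficient regime: if some $p_j$ drops near $1$ then $|\phi_{X_j}|^{p_j}$ need not be integrable at all, and for $p_j \in (1,2]$ the tail bound $\sqrt{2\pi K/t}$ raised to the $p_j/2$ is no longer integrable either. There seems to be no clean way around isolating the single large coefficient and dispatching it by conditioning, and this asymmetric treatment is precisely what forces the constant $CK$ here rather than the sharp $\sqrt{2}\, K$ of Theorem~\ref{thm: dens sums}.
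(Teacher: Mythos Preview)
Your proposal is correct and follows essentially the same route as the paper's proof: Fourier inversion, the large-coefficient case handled by conditioning, and in the small-coefficient regime the generalized H\"older inequality with exponents $1/a_j^2$, passage to the decreasing rearrangement, and the two-piece estimate from Lemma~\ref{eq: char function decay}. Your final paragraph identifying the integrability obstruction for $p_j\le 2$ as the reason the conditioning step is unavoidable is exactly right, and matches the paper's treatment.
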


\begin{proof}
By replacing $X_j$ with $KX_j$ we can assume that $K=1$.
By replacing $X_j$ with $-X_j$ when necessary we can assume that all $a_j \ge 0$.
We can further assume that $a_j > 0$ by dropping all zero terms from the sum.
If there exists $j_{0}$ with $a_{j_0} > 1/2$, then the conclusion follows by conditioning
on all $X_j$ except $X_{j_0}$. Thus we can assume that
$$
0 < a_j < \frac{1}{2} \quad \text{for all } j.
$$
Finally, by translating $X_j$ if necessary we reduce the problem to bounding the
density of $S = \sum_j a_j X_j$ at the origin.

We may assume that $\phi_{X_j} \in L_1$ by adding to $X_j$ 
an independent normal random variable with an arbitrarily small variance.
Fourier inversion formula associated with the Fourier transform \eqref{eq: Fourier} 
yields that the density of $S$ at the origin (defined using \eqref{eq: density})
can be reconstructed from its Fourier transform as 
\begin{equation}         \label{eq: char function}
f_S(0) = \int_\R \widehat{f_S}(x) \, dx
= \int_\R \phi_S(2 \pi x) \, dx
\le \int_\R |\phi_S(x)| \, dx =:I.
\end{equation}
By independence, we have
$\phi_S(x) = \prod_j \phi_{X_j} (a_j t)$, so
$$
I = \int_\R \prod_j |\phi_{X_j} (a_j x)| \, dx.
$$
We use the generalized H\"older's inequality with exponents $1/a_i^2$ whose reciprocals
sum to $1$ by assumption. It yields
\begin{equation}         \label{eq: I as product}
I \le \prod_j \Big( \int_\R |\phi_{X_j} (a_j x)|^{1/a_j^2} \, dx \Big)^{a_j^2}.
\end{equation}
The value of the integrals will not change if we replace the functions $|\phi_{X_j}|$ by their
non-increasing rearrangements $|\phi_{X_j}|^*$. After change of variable, we obtain
$$
I \le \prod_j \Big( \frac{1}{a_j} \int_0^\infty |\phi_{X_j}|^*(x)^{1/a_j^2} \, dx \Big)^{a_j^2}.
$$

We use Lemma~\ref{eq: char function decay} to bound the integrals
$$
I_j := \int_0^\infty |\phi_{X_j}|^*(x)^{1/a_j^2} \, dx
\le \int_0^{2\pi} (1-cx^2)^{1/a_j^2} \, dx + \int_{2\pi}^\infty (2\pi/x)^{1/(2a_j^2)} \, dx.
$$
Bounding $1-cx^2$ by $e^{-cx^2}$, we see that the first integral (over $[0,2\pi]$) is bounded
by $C a_j$. The second integral (over $[2\pi,\infty)$) is bounded by
$$
\frac{2\pi}{1/(2a_j^2) - 1} \le 8 \pi a_j^2,
$$
where we used that $a_j \le 1/2$. Therefore
$$
I_j \le C a_j + 8 \pi a_j^2 \le 2C a_j
$$
provided that constant $C$ is chosen large enough. Hence
$$
I \le \prod_j (2C)^{a_j^2} = (2C)^{\sum a_j^2} = 2C.
$$
Substituting this into \eqref{eq: char function} completes the proof.
\end{proof}

\section{Toward Theorem~\ref{thm: dens PX} in higher dimensions. The case of small $Pe_j$.}			\label{s: all small}

Our proof of Theorem~\ref{thm: dens PX} will go differently depending on
whether all vectors $Pe_j$ are small or some $Pe_j$ are large.
In the first case, we proceed with a high-dimensional version of the argument 
from Section~\ref{s: dim one}, where H\"older's inequality will be replaced by Brascamp-Lieb's inequality.
In the second case, we will remove the large vectors $Pe_j$ one by one, using a new precise tensorization 
property of concentration functions. 

In this section, we treat the case where all vectors $Pe_j$ are small.
Theorem~\ref{thm: dens PX} can be formulated in this case as follows.

\begin{proposition}				\label{prop: Pej small}
  Let $X$ be a random vector and $P$ be a projection
  which satisfy the assumptions of Theorem~\ref{thm: dens PX}.
  Assume that
  $$
  \|Pe_j\|_2 \le 1/2 \quad \text{for all } j=1,\ldots,n.
  $$
  Then the density of the random vector $PX$ is bounded by $(CK)^d$ almost everywhere.
\end{proposition}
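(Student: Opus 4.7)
The plan is to lift the one-dimensional Fourier argument of Theorem~\ref{thm: dim one} to the $d$-dimensional subspace $E := \mathrm{range}(P)$, replacing the generalized H\"older inequality with K.~Ball's geometric Brascamp-Lieb inequality. After rescaling we may assume $K=1$, and by translation it suffices to bound the density of $PX$ at the origin of $E$. A harmless smoothing (adding an independent Gaussian of vanishing variance to each $X_j$) places us in the case $\phi_{X_j} \in L^1(\R)$. The $d$-dimensional Fourier inversion formula on $E$ together with independence of the $X_j$ and the identity $\<y, PX\> = \sum_j X_j \<y, Pe_j\>$ for $y \in E$ then yields
\[
f_{PX}(0) \;\le\; \int_E |\phi_{PX}(y)|\, dy \;=\; \int_E \prod_{j=1}^n \bigl|\phi_{X_j}(\<y, Pe_j\>)\bigr|\, dy.
\]

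To bound this integral, I would set $c_j := \|Pe_j\|_2^2$ and $u_j := Pe_j / \|Pe_j\|_2$ for the indices where $Pe_j \ne 0$ (zero terms are trivial). Then the $u_j$ are unit vectors in $E$ and, viewed as operators on $E$,
\[
\sum_j c_j\, u_j u_j^{\tran} \;=\; \sum_j Pe_j (Pe_j)^{\tran} \;=\; P P^{\tran} \;=\; I_E, \qquad \sum_j c_j \;=\; \|P\|_\HS^2 \;=\; d.
\]
This is exactly the hypothesis of the geometric Brascamp-Lieb inequality, which gives
\[
\int_E \prod_j h_j(\<y, u_j\>)^{c_j}\, dy \;\le\; \prod_j \|h_j\|_{L^1(\R)}^{c_j}
\]
for nonnegative $h_j \colon \R \to \R$. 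Applying this with $h_j(s) := |\phi_{X_j}(\|Pe_j\|_2\, s)|^{1/c_j}$, changing variables in each one-dimensional integral, and passing to the non-increasing rearrangement reduces matters to controlling
\[
f_{PX}(0) \;\le\; \prod_j \Bigl( \frac{1}{\|Pe_j\|_2} \int_0^\infty |\phi_{X_j}|^*(s)^{1/c_j}\, ds \Bigr)^{c_j}.
\]

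The assumption $\|Pe_j\|_2 \le 1/2$, i.e.\ $c_j \le 1/4$, makes each exponent $1/c_j \ge 4$, so the estimate of Lemma~\ref{eq: char function decay} is comfortably integrable. Splitting the inner integral at $s = 2\pi$, the portion on $[0,2\pi]$ contributes a Gaussian piece of size $O(\sqrt{c_j})$ (via $1 - c s^2 \le e^{-c s^2}$), and the tail $\int_{2\pi}^\infty (2\pi/s)^{1/(2c_j)}\, ds$ equals $2\pi/(1/(2c_j) - 1) = O(c_j) = O(\sqrt{c_j})$ for $c_j \le 1/4$. Dividing by $\|Pe_j\|_2 = \sqrt{c_j}$ gives an absolute $O(1)$ bound on each factor, and the product telescopes to $C^{\sum_j c_j} = C^d$, as required. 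The main obstacle I anticipate is setting up the Brascamp-Lieb data correctly, in particular verifying that the rank-one sum of the $u_j u_j^{\tran}$ with weights $c_j$ is the identity on $E$ (which is just $P P^{\tran} = P$ restricted to its range) and that $\sum c_j = d$; once this ``geometric'' framing is in place, the rest is a direct high-dimensional analog of Section~\ref{s: dim one}, with the smallness hypothesis on $\|Pe_j\|_2$ playing the role that $|a_j| \le 1/2$ played there.
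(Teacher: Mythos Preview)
Your proposal is correct and follows essentially the same approach as the paper: Fourier inversion, independence to factor the characteristic function, the geometric Brascamp--Lieb inequality with weights $c_j=\|Pe_j\|_2^2$ and unit vectors $u_j=Pe_j/\|Pe_j\|_2$, and then the one-dimensional integral estimate from Lemma~\ref{eq: char function decay} using $\|Pe_j\|_2\le 1/2$. The only cosmetic difference is that the paper first writes $P=R^\tran R$ with $RR^\tran=I_d$ to transfer the computation to $\R^d$, whereas you work intrinsically on $E=\mathrm{range}(P)$; the Brascamp--Lieb data $(\sum_j c_j u_j u_j^\tran=I_E,\ \sum_j c_j=d)$ and the subsequent integral bounds are identical in both framings.
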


The proof will be based on Brascamp-Lieb's inequality.

\begin{theorem}[Brascamp-Lieb \cite{BL}, see \cite{Ball 89}]
  Let $u_1,\ldots,u_n \in \R^d$ be unit vectors and $c_1,\ldots,c_n >0$ be real numbers satisfying
  $$
  \sum_{i=1}^n c_j u_j u_j^\tran = I_d.
  $$
  Let $f_1,\ldots,f_n : \R \to [0,\infty)$ be integrable functions. Then
  $$
  \int_{\R^n} \prod_{j=1}^n f_j(\< x, u_j\> )^{c_j} \; dx
  \le \prod_{j=1}^n \Big( \int_\R f_j(t) \; dt \Big)^{c_j}.
  $$
\end{theorem}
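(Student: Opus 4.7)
The plan is to follow Lieb's classical route: reduce the inequality to the case where every $f_j$ is a centered Gaussian, compute both sides explicitly, and then recognize what remains as a determinantal AM--GM inequality. By homogeneity I may assume $\int f_j = 1$ for each $j$, and it suffices to show that the Brascamp--Lieb constant
$$
\mathrm{BL}(u,c) := \sup_{f_1,\ldots,f_n \ge 0} \frac{\int_{\R^d} \prod_{j} f_j(\langle x, u_j\rangle)^{c_j}\, dx}{\prod_{j} \|f_j\|_1^{c_j}}
$$
equals $1$.

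The first and hardest step is Lieb's theorem that centered Gaussians $f_j(t) = e^{-\alpha_j t^2/2}$ extremize $\mathrm{BL}(u,c)$; this is where essentially all the analysis resides and is the main obstacle of the argument. The cleanest modern proof is the heat-flow argument of Carlen--Lieb--Loss: evolving each $f_j$ by the Ornstein--Uhlenbeck semigroup produces a monotone ratio whose limit is the Gaussian case. Granting this, a direct Gaussian integral gives
$$
\text{LHS} = \int_{\R^d} e^{-\langle A x, x\rangle/2}\, dx = (2\pi)^{d/2} (\det A)^{-1/2}, \qquad A := \sum_{j} c_j \alpha_j u_j u_j^\tran,
$$
while $\|f_j\|_1 = \sqrt{2\pi/\alpha_j}$ together with the trace identity $\sum_j c_j = \tr\bigl(\sum_j c_j u_j u_j^\tran\bigr) = \tr I_d = d$ gives $\text{RHS} = (2\pi)^{d/2} \prod_j \alpha_j^{-c_j/2}$. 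The inequality then collapses to the purely algebraic statement
$$
\det\Bigl( \sum_{j} c_j \alpha_j u_j u_j^\tran \Bigr) \ge \prod_{j} \alpha_j^{c_j}
$$
under the hypothesis $\sum_j c_j u_j u_j^\tran = I_d$.

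For this last step I would form the $d \times n$ matrix $M$ with columns $\sqrt{c_j}\, u_j$, so that $M M^\tran = I_d$ and $A = M D M^\tran$ with $D = \diag(\alpha_1,\ldots,\alpha_n)$. The Cauchy--Binet formula yields
$$
\det A = \sum_{|S|=d} \det(M_S)^2 \prod_{j \in S} \alpha_j \quad \text{and} \quad \sum_{|S|=d} \det(M_S)^2 = \det(MM^\tran) = 1,
$$
so the numbers $p_S := \det(M_S)^2$ form a probability distribution on the $d$-element subsets of $\{1,\ldots,n\}$. A short combinatorial computation, evaluating the $j$-th diagonal entry of the orthogonal projection $M^\tran M$ again by Cauchy--Binet, gives the key identity $\sum_{S \ni j} p_S = \|M e_j\|_2^2 = c_j$. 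Concavity of $\log$ then yields
$$
\log \det A = \log\Bigl(\sum_S p_S \prod_{j \in S} \alpha_j\Bigr) \ge \sum_S p_S \sum_{j \in S} \log \alpha_j = \sum_{j} c_j \log \alpha_j,
$$
which is exactly the desired bound. The identity $\sum_{S \ni j} p_S = c_j$ is the only small combinatorial point that needs verification; modulo the Gaussian reduction of the first paragraph, the entire proof is otherwise a one-line Jensen estimate.
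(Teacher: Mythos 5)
The paper itself offers no proof of this statement: it is imported as a black-box citation to Brascamp--Lieb \cite{BL} in the geometric normalization of Ball \cite{Ball 89}, so there is no internal argument to compare yours against. Judged on its own terms, your outline is the standard route, and the parts you actually carry out are correct: the Gaussian computation (using $\sum_j c_j = \tr\big(\sum_j c_j u_j u_j^\tran\big) = d$), the reduction to $\det\big(\sum_j c_j\alpha_j u_j u_j^\tran\big) \ge \prod_j \alpha_j^{c_j}$, the Cauchy--Binet expansion with $p_S = \det(M_S)^2$ summing to $\det(MM^\tran)=1$, the leverage-score identity $\sum_{S\ni j} p_S = (M^\tran M)_{jj} = c_j$ (which indeed follows from Cauchy--Binet, e.g.\ by differentiating $\det(MDM^\tran)$ in $\alpha_j$ at $D=I$), and the concluding Jensen step.

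The genuine gap is the one you name yourself: the reduction to centered Gaussian extremizers (Lieb's theorem), or equivalently the Carlen--Lieb--Loss heat-flow argument, is invoked with ``granting this'' rather than proved, and that is where all of the analytic content of the theorem lives. Everything after that point is elementary linear algebra, so as a self-contained proof the proposal establishes only the Gaussian case of the inequality. If you want a complete argument at the level of detail you give for the determinant step, the two realistic options are Barthe's mass-transportation proof or the semigroup/heat-flow proof specialized to the rank-one geometric setting $\sum_j c_j u_j u_j^\tran = I_d$; both are substantially shorter than the general Lieb theorem, and both terminate in exactly the determinantal inequality you verified, so your Cauchy--Binet lemma would not be wasted. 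Given that the paper treats the whole theorem as a citation, this is not a defect relative to the paper, but it should be stated clearly that your write-up is a proof modulo Lieb's extremality theorem, not a proof of the Brascamp--Lieb inequality itself.
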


\begin{proof}[Proof of Proposition~\ref{prop: Pej small}]
The singular value decomposition of $P$ yields the existence of a
$d \times n$ matrix $R$ satisfying
$$
P = R^\tran R, \quad R R^\tran = I_d.
$$
It follows that $\|Px\|_2 = \|Rx\|_2$ for all $x \in \R^d$.
This allows us to replace $P$ by $R$ in the statement of the proposition.
Moreover, by replacing $X_j$ with $KX_j$ we can assume that $K=1$.
Finally, translating $X$ if necessary we reduce the problem to bounding
the density of $RX$ at the origin.

As in the proof of Theorem \ref{thm: dim one}, Fourier inversion formula 
associated with the Fourier transform in $n$ dimensions yields that the density of $RX$ at the origin
(defined using \eqref{eq: density})
can be reconstructed from its Fourier transform as
\begin{equation}         \label{eq: L1 char function}
f_{RX}(0) = \int_{\R^d} \widehat{f_{RX}}(x) \; dx
= \int_{\R^d} \phi_{RX}(2 \pi x) \; dx
\le \int_{\R^d} |\phi_{RX}(x)| \; dx
\end{equation}
where
\begin{equation}         \label{eq: phiRX}
\phi_{RX}(x) = \E \exp \big( i \< x, RX\> \big)
\end{equation}
is the characteristic function of $RX$.
Therefore, to complete the proof, it suffices to bound the integral in the
right hand side of \eqref{eq: L1 char function} by $C^d$.

In order to represent $\phi_{RX}(x)$
more conveniently for application of Brascamp-Lieb inequality,
we denote
$$
a_j := \|Re_j\|_2, \quad u_j := \frac{Re_j}{\|Re_j\|_2}.
$$
Then $R = \sum_{j=1}^n a_j u_j e_j^\tran$, so the identity $RR^\tran = I_d$ can be written as
\begin{equation}         \label{eq: uj decomposition}
\sum_{j=1}^n a_j^2 u_j u_j^\tran = I_d.
\end{equation}
Moreover, we have $\< x, RX\> = \sum_{i=1}^n a_j \< x, u_j\> X_j$.
Substituting this into \eqref{eq: phiRX} and using independence, we obtain
$$
\phi_{RX}(x) = \prod_{j=1}^n \E \exp \big( i a_j \< x, u_j \> X_j \big).
$$
Define the functions $f_1, \ldots, f_n : \R \to [0,\infty)$ as
$$
f_j(t) := \big| \E \exp(i a_j t X_j) \Big|^{1/a_j^2} = \big| \phi_{X_j} (a_j t) \big|^{1/a_j^2}.
$$
Recalling \eqref{eq: uj decomposition}, we apply Brascamp-Lieb inequality
for these functions and obtain
\begin{align}			\label{eq: int by product}
\int_{\R^d} |\phi_{RX}(x)| \; dx
  &= \int_{\R^d} \prod_{j=1}^n f_j \big( \< x, u_j\> \big)^{a_j^2} \; dx  \nonumber\\
  &\le \prod_{j=1}^n \Big( \int_\R f_j(t) \; dt \Big)^{a_j^2}
  = \prod_{j=1}^n \Big( \int_\R \big| \phi_{X_j} (a_j t) \big|^{1/a_j^2} \; dt \Big)^{a_j^2}.
\end{align}
We arrived at the same quantity as we encountered in one-dimensional argument in \eqref{eq: I as product}.
Following that argument, which uses the assumption that all $a_j \le 1/2$, we bound the product
the quantity above by
$$
(2C)^{\sum_{j=1}^n a_j^2}.
$$
Recalling that $a_j = \|Re_j\|_2$ and , we find that
$\sum_{j=1}^n a_j^2 = \sum_{j=1}^n \|Re_j\|_2^2 = \tr(RR^\tran) = \tr(I_d) = d$.
Thus the right hand side of \eqref{eq: int by product} is bounded by $(2C)^d$.
The proof of Proposition~\ref{prop: Pej small} is complete.
\end{proof}

\section{Toward Theorem~\ref{thm: dens PX} in higher dimensions. Removal of large $Pe_j$.}				\label{s: some large}

Next we turn to the case where not all vectors $Pe_i$ are small.
In this case, we will remove the large vectors $Pe_i$ one by one.
The non-trivial task is how not to lose power at each step.
This will be achieved with the help of the following 
precise tensorization property of small ball probabilities.

\begin{lemma}[Tensorization]				\label{lem: tensorization}
  Let $Z_1, Z_2 \ge 0$ be random variables and $M_1, M_2, p \ge 0$ be real numbers.
  Assume that
  \begin{enumerate}[(i)]
    \item $\Pr{Z_1 \le t \;|\; Z_2} \le M_1 t$ almost surely in $Z_2$ for all $t \ge 0$;
    \item $\Pr{Z_2 \le t} \le M_2 t^p$ for all $t \ge 0$.
  \end{enumerate}
  Then
  $$
  \Pr{\sqrt{Z_1^2 + Z_2^2} \le t} \le \frac{C M_1 M_2 }{\sqrt{p+1}} \; t^{p+1} \quad \text{for all } t \ge 0.
  $$
\end{lemma}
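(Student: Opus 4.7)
The plan is to condition on $Z_2$ and use assumption (i) to reduce to an integral in $Z_2$ alone, then apply (ii). Specifically, the event $\{\sqrt{Z_1^2 + Z_2^2} \le t\}$ is the same as $\{Z_1 \le \sqrt{t^2 - Z_2^2}\} \cap \{Z_2 \le t\}$, so conditioning on $Z_2$ and applying (i) gives
\[
\Pr{\sqrt{Z_1^2 + Z_2^2} \le t} \le M_1 \, \E \bigl[ \sqrt{t^2 - Z_2^2} \cdot \mathbf{1}_{\{Z_2 \le t\}} \bigr].
\]

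Next I would rewrite the expectation using the layer-cake / Fubini identity, namely $\E[W] = \int_0^\infty \Pr{W > u}\, du$ applied to $W = \sqrt{t^2 - Z_2^2}\cdot\mathbf{1}_{\{Z_2 \le t\}}$. Since $W > u$ is equivalent (for $0 < u < t$) to $Z_2 < \sqrt{t^2 - u^2}$, hypothesis (ii) yields
\[
\E[W] = \int_0^t \Pr{Z_2 < \sqrt{t^2 - u^2}}\, du \le M_2 \int_0^t (t^2 - u^2)^{p/2}\, du.
\]

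The remaining task is a calculus exercise: substituting $u = t \sin\theta$ gives
\[
\int_0^t (t^2 - u^2)^{p/2}\, du = t^{p+1} \int_0^{\pi/2} \cos^{p+1}\theta \, d\theta,
\]
which is a Wallis-type integral. Combining Stirling's formula for the Beta function representation $\int_0^{\pi/2}\cos^{p+1}\theta\, d\theta = \tfrac{\sqrt{\pi}}{2}\,\Gamma(\tfrac{p+2}{2})/\Gamma(\tfrac{p+3}{2})$ with the elementary bound $\int_0^{\pi/2}\cos^{p+1}\theta\,d\theta \le 1$ at $p=0$ yields a uniform estimate $\le C/\sqrt{p+1}$ for all $p \ge 0$. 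Putting everything together produces the claimed bound $C M_1 M_2 \, t^{p+1}/\sqrt{p+1}$.

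I do not anticipate a serious obstacle; the one point that requires slight care is obtaining the $1/\sqrt{p+1}$ factor, which must hold uniformly in $p \ge 0$ rather than only asymptotically. This is precisely the Wallis integral estimate, and the uniformity is what makes the resulting tensorization ``precise'' in the sense used later — iterating the lemma over successive coordinates will not accumulate a multiplicative loss per step.
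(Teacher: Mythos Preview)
Your proof is correct and follows essentially the same route as the paper: condition on $Z_2$, apply (i), then turn the resulting expectation into an integral against the distribution of $Z_2$, apply (ii), and recognize the remaining one-variable integral as a Beta function bounded by $C/\sqrt{p+1}$ via Stirling. The only cosmetic difference is that the paper writes the expectation as a Stieltjes integral $\int_0^s (s-x)^{1/2}\,dF_2(x)$ and integrates by parts, whereas you use the layer-cake formula directly; both land on the same Beta/Wallis integral.
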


\begin{remark}
  This lemma will be used later in an inductive argument.
  To make the inductive step, two features will be critical:
  (a) the term of order $\sqrt{p}$ in the denominator
  of the probability estimate; 
  (b) the possibility of choosing different values for the parameters $M_1$ and $M_2$.
\end{remark}

\begin{proof}
Denoting $s = t^2$, we compute the probability by iterative integration in $(Z_1^2, Z_2^2)$ plane:
\begin{equation}         \label{iterative integral}
\Pr{Z_1^2 + Z_2^2 \le s} = \int_0^s \Pr{Z_1 \le (s-x)^{1/2} \;|\; Z_2^2 = x} \; d F_2(x)
\end{equation}
where $F_2(x) = \Pr{Z_2^2 \le x}$ is the cumulative distribution function of $Z_2^2$.
Using hypothesis (i) of the lemma, we can bound the right hand side of \eqref{iterative integral} by
$$
M_1 \int_0^s (s-x)^{1/2} \; d F_2(x) = \frac{M_1}{2} \int_0^s F_2(x) (s-x)^{-1/2} \; dx,
$$
where the last equation follows by integration by parts.
Hypothesis (ii) of the lemma says that $F_2(x) \le M_2 \, x^{p/2}$, so the expression above
is bounded by
$$
\frac{M_1 M_2}{2} \int_0^s x^{p/2} (s-x)^{-1/2} \; dx
= \frac{M_1 M_2}{2} \, s^{\frac{p+1}{2}} \int_0^1 u^{p/2} (1-u)^{-1/2} \; du
$$
where the last equation follows by substitution $x=su$.

The integral in the right hand side is the value of beta function
$$
B \Big( \frac{p}{2}+1, \frac{1}{2} \Big).
$$
Bounding this value is standard. One can use the fact that
$$
B(x,y) \sim \Gamma(y) x^{-y} \quad \text{as } x \to \infty, \quad y \text{ fixed}
$$
which follows from the identity $B(x,y) = \Gamma(x) \Gamma(y) / \Gamma(x+y)$
and Stirling's approximation. (Here $f(x) \sim g(x)$ means that $f(x) / g(x) \to 1$.)
It follows that
$$
B \Big( \frac{p}{2}+1, \frac{1}{2} \Big)
\sim \Gamma \Big(\frac{1}{2}\Big) \Big( \frac{p}{2}+1 \Big)^{-1/2} \quad \text{as } p \to \infty.
$$
Therefore the ratio of the left and right hand sides is bounded in $p$.
Hence there exists an absolute constant $C$ such that
$$
B \Big( \frac{p}{2}+1, \frac{1}{2} \Big) \le \frac{C}{\sqrt{p+1}} \quad \text{for all } p \ge 0.
$$

We have proved that
$$
\Pr{Z_1^2 + Z_2^2 \le s} \le \frac{M_1 M_2}{2} \, s^{\frac{p+1}{2}} \frac{C}{\sqrt{p+1}}.
$$
Substituting $s=t^2$ finishes the proof.
\end{proof}

\begin{corollary}[Tensorization, continued]					\label{cor: tensorization}
  Let $Z_1, Z_2 \ge 0$ be random variables and $K_1, K_2 \ge 0$, $d > 1$ be real numbers.
  Assume that
  \begin{enumerate}[(i)]
    \item $\Pr{Z_1 \le t \;|\; Z_2} \le K_1 t$ almost surely in $Z_2$ for all $t \ge 0$;
    \item $\Pr{Z_2 \le t \sqrt{d-1}} \le (K_2 t)^{d-1}$ for all $t \ge 0$.
  \end{enumerate}
  Then
  $$
  \Pr{\sqrt{Z_1^2 + Z_2^2} \le t \sqrt{d}} \le (K_2 t)^d \quad \text{for all } t \ge 0,
  $$
  provided that $K_1 \le c K_2$ with a suitably small absolute constant $c$. 
\end{corollary}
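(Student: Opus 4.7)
The plan is to apply Lemma~\ref{lem: tensorization} directly, with the dimension parameter $p$ chosen to be $d-1$. The hypothesis (i) of the corollary is already in the exact form of hypothesis (i) of the lemma, with $M_1 = K_1$. To match hypothesis (ii), I would rewrite the corollary's assumption by the change of variable $s = t\sqrt{d-1}$, obtaining
\[
\Pr{Z_2 \le s} \le \left(\frac{K_2}{\sqrt{d-1}}\right)^{d-1} s^{d-1},
\]
so that the lemma applies with $p = d-1$ and $M_2 = (K_2/\sqrt{d-1})^{d-1}$.

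Plugging these parameters into the conclusion of Lemma~\ref{lem: tensorization} gives
\[
\Pr{\sqrt{Z_1^2 + Z_2^2} \le t} \le \frac{C\, K_1}{\sqrt{d}} \left(\frac{K_2}{\sqrt{d-1}}\right)^{d-1} t^d
\quad \text{for all } t \ge 0.
\]
To reach the target statement, I would then rescale by $t \mapsto t\sqrt{d}$. After collecting powers of $d$ and $d-1$, the factor $d^{d/2}/\big(\sqrt{d}\,(d-1)^{(d-1)/2}\big)$ simplifies to $\big(d/(d-1)\big)^{(d-1)/2}$, yielding
\[
\Pr{\sqrt{Z_1^2 + Z_2^2} \le t\sqrt{d}} \le C\,K_1\,K_2^{d-1}\left(\frac{d}{d-1}\right)^{(d-1)/2} t^d.
\]

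The last step is to compare this with the goal $(K_2 t)^d$: it suffices to have
\[
C\,K_1 \left(\frac{d}{d-1}\right)^{(d-1)/2} \le K_2.
\]
Here the only point that requires a moment of thought, and the closest thing to an obstacle, is controlling the factor $(d/(d-1))^{(d-1)/2}$ uniformly in $d>1$. Writing it as $(1 + 1/(d-1))^{(d-1)/2}$ and using the classical monotonicity of $(1+1/n)^n$ to $e$, this quantity is bounded above by $\sqrt{e}$ for every $d > 1$ (including non-integer $d$, and with the correct limiting behavior $\to 1$ as $d \to 1^+$). Therefore the required condition holds as soon as $K_1 \le c K_2$ with $c := 1/(C\sqrt{e})$, which is exactly the assumption of the corollary, completing the proof.
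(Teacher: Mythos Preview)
Your argument is correct and follows the same route as the paper: apply Lemma~\ref{lem: tensorization} with $M_1=K_1$, $M_2=(K_2/\sqrt{d-1})^{d-1}$, $p=d-1$, substitute $t\mapsto t\sqrt{d}$, and absorb the factor $(d/(d-1))^{(d-1)/2}$ into the choice of $c$. The only difference is cosmetic---you bound this factor by $\sqrt{e}$ while the paper bounds it by $3$.
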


\begin{proof}
Random variables $Z_1$, $Z_2$ satisfy the assumptions of Lemma~\ref{lem: tensorization} with
$$
M_1 = K_1, \quad M_2 = \Big( \frac{K_2}{\sqrt{d-1}} \Big)^{d-1}, \quad p = d-1.
$$
The conclusion of that lemma is that
$$
\Pr{\sqrt{Z_1^2 + Z_2^2} \le t \sqrt{d}}
\le C K_1 \Big( \frac{K_2}{\sqrt{d-1}} \Big)^{d-1} \frac{1}{\sqrt{d}} (t \sqrt{d})^d.
$$
Using the hypothesis that $K_1 \le c K_2$, we bound the right hand side by
$$
C c (K_2 t)^d \, \Big( \frac{d}{d-1} \Big)^{\frac{d-1}{2}} \le 3 C c (K_2 t)^d.
$$
If we choose $c = 1/(3C)$ then the right hand side gets bounded by $(K_2 t)^d$, as claimed.
\end{proof}

\begin{proposition}[Removal of large $Pe_i$]				\label{prop: Pei large}			
  Let $X$ be a random vector satisfying the assumptions of Theorem~\ref{thm: dens PX},
  and let $P$ be an orthogonal projection in $\R^n$ onto a $d$-dimensional subspace.
  Let $\nu >0$, and assume that there exists $i \in \{1,\ldots,n\}$ such that
  $$
  \|Pe_i\|_2 \ge \nu.
  $$
  Define $Q$ to be the orthogonal projection in $\R^n$ such that
  $$
  \ker(Q) = \Span\{ \ker(P), P e_i \}.
  $$
  Let $M \ge C_0$ where $C_0$ is an absolute constant. If
  \begin{equation}         \label{eq: QX hypothesis}
  \Pr{ \|QX\|_2 \le t \sqrt{d-1} } \le (M Kt/\nu)^{d-1} \quad \text{for all } t \ge 0,
  \end{equation}
  then
  $$
  \Pr{ \|PX\|_2 \le t \sqrt{d} } \le (M Kt/\nu)^d \quad \text{for all } t \ge 0.
  $$
\end{proposition}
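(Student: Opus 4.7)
The plan is to apply Corollary~\ref{cor: tensorization} after splitting $\|PX\|_2^2$ into two orthogonal pieces corresponding to $Q$ and to the direction $v := Pe_i/\|Pe_i\|_2$. Since $v$ lies in the range of $P$ and is orthogonal to the range of $Q$ (because $\ker(Q)$ contains $Pe_i$ along with $\ker(P)$), we have the orthogonal decomposition $Px = Qx + \<x,v\> v$, hence
$$
\|PX\|_2^2 = \|QX\|_2^2 + \<X,v\>^2.
$$
A crucial observation is that $Q e_i = P e_i - \<P e_i, v\> v = P e_i - \|Pe_i\|_2\, v = 0$, so $QX = \sum_{j \ne i} X_j\, Qe_j$ is a function of $(X_j)_{j \ne i}$ alone. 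In particular $X_i$ appears only in the second piece $\<X,v\>$.

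Now set $Z_2 := \|QX\|_2$ and $Z_1 := |\<X,v\>|$. Hypothesis~(ii) of Corollary~\ref{cor: tensorization} is exactly the assumption \eqref{eq: QX hypothesis} with $K_2 := MK/\nu$. For hypothesis~(i), I would condition on $(X_j)_{j \ne i}$. Writing $\<X, v\> = v_i X_i + c$ where $c$ is determined by this conditioning and $v_i = \<v, e_i\> = \|Pe_i\|_2 \ge \nu$, the density of $v_i X_i + c$ (given the conditioning) is bounded by $K/|v_i| \le K/\nu$. Hence
$$
\Pr{Z_1 \le t \mid (X_j)_{j \ne i}} \le \frac{2K}{\nu}\, t,
$$
and by tower property the same bound holds when we condition only on the coarser $\sigma$-algebra generated by $Z_2$. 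Thus hypothesis~(i) holds with $K_1 := 2K/\nu$.

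The requirement $K_1 \le c K_2$ of Corollary~\ref{cor: tensorization} becomes $2K/\nu \le c\, MK/\nu$, i.e.\ $M \ge 2/c$, which we encode by choosing $C_0 := 2/c$. The corollary then delivers exactly the desired estimate
$$
\Pr{\|PX\|_2 \le t\sqrt{d}} = \Pr{\sqrt{Z_1^2 + Z_2^2} \le t\sqrt{d}} \le (K_2 t)^d = (MKt/\nu)^d.
$$
The one case not covered, $d = 1$, needs to be settled directly since Corollary~\ref{cor: tensorization} assumes $d > 1$; but then $P$ has one-dimensional range $\Span(v)$ and $\|PX\|_2 = |\<X,v\>|$, so the same conditioning argument on $(X_j)_{j \ne i}$ gives $\Pr{\|PX\|_2 \le t} \le 2Kt/\nu \le MKt/\nu$ whenever $M \ge 2$.

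The substantive content, rather than any hard step, is the identity $Qe_i = 0$, which cleanly separates the randomness of $X_i$ from that driving $Z_2$; everything else is bookkeeping plus the tensorization lemma. The only mild subtlety is that Corollary~\ref{cor: tensorization} is stated for deterministic constants $M_1, M_2$ and a conditional bound on $Z_1$ given $Z_2$, so one should check that the conditioning step above is valid at the $\sigma$-algebra generated by $Z_2$ rather than by the full $(X_j)_{j\ne i}$; this follows immediately from the tower property since the bound on the conditional probability of $\{Z_1 \le t\}$ is a deterministic constant.
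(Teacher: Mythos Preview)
Your proof is correct and follows essentially the same approach as the paper: decompose $P = Q + vv^\tran$ with $v = Pe_i/\|Pe_i\|_2$, use $Qe_i = 0$ to isolate $X_i$ in the $Z_1$ component, and then invoke Corollary~\ref{cor: tensorization}. Your presentation via the unit vector $v$ is in fact a bit cleaner than the paper's (which introduces auxiliary coefficients $a_j$ to express the same decomposition), and you correctly carry the factor $2$ in the bound $\Pr{Z_1\le t\mid(X_j)_{j\ne i}}\le 2Kt/\nu$ that the paper silently drops; both are absorbed into $C_0$ anyway.
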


\begin{proof}
Without loss of generality, we can assume that $i=1$.
Let us record a few basic properties of $Q$.
A straightforward check shows that 
\begin{equation}							\label{eq: P-Q}
\text{$P-Q$ is the orthogonal projection onto $\Span(Pe_1)$.}
\end{equation}
It follows that $(P-Q)e_1 = Pe_1$, since the orthogonal projection of $e_1$ 
onto $\Span(Pe_1)$ equals $Pe_1$. Canceling $Pe_1$ on both sides, we have
\begin{equation}							\label{eq: Qe1}
Qe_1 = 0.
\end{equation}
It follows from \eqref{eq: P-Q} that $P$ has the form 
\begin{equation}         \label{eq: Px}
Px = \Big( \sum_{j=1}^n a_j x_j \Big) Pe_1 + Q x \quad \text{for } x = (x_1,\ldots,x_n) \in \R^n,
\end{equation}
where $a_j$ are fixed numbers (independent of $x$).
Substituting $x=e_1$, we obtain using \eqref{eq: Qe1} that $Pe_1 = a_1 Pe_1 + Qe_1 = a_1 Pe_1$. 
Thus 
\begin{equation}         \label{eq: a1}
a_1 = 1.
\end{equation}
Furthermore, we note that
\begin{equation}         \label{eq: Qx x1}
\text{$Qx$ does not depend on $x_1$}
\end{equation}
since $Qx = Q(\sum_{i=1}^n x_j e_j) = \sum_{i=1}^n x_j Qe_j$ and
$Qe_1 = 0$ by \eqref{eq: Qe1}.
Finally, since $Pe_1$ is orthogonal to the image of $Q$, 
the two vectors in the right side of \eqref{eq: Px}
are orthogonal. Thus
\begin{equation}         \label{eq: Px norm}
\|Px\|_2^2 = \Big( \sum_{j=1}^n a_j x_j \Big)^2 \|Pe_1\|_2^2 + \|Qx\|_2^2.
\end{equation}

Now let us estimate $\|PX\|_2$ for a random vector $X$.
We express $\|PX\|_2^2$ using \eqref{eq: Px norm} and \eqref{eq: a1} as
$$
\|P X\|_2^2
= \Big( X_1 + \sum_{j=2}^n a_j X_j \Big)^2 \|Pe_1\|_2^2 + \|QX\|_2^2 \\
=: Z_1^2 + Z_2^2
$$
and try to apply Corollary~\ref{cor: tensorization}. Let first us check the hypotheses of that corollary.
Since by \eqref{eq: Qx x1} $Z_2$ is determined by $X_2,\ldots,X_n$ (and is independent of $X_1$),
and $\|Pe_i\|_2 \ge \nu$ by a hypothesis of the lemma, we have
\begin{align*}
\Pr{Z_1 \le t \;|\; Z_2}
  &\le \max_{X_2,\ldots,X_n} \Pr{ \Big| X_1 + \sum_{j=2}^n a_j X_j \Big| \le t/\nu \;\Big|\; X_2,\ldots,X_n} \\
  &\le \max_{u \in \R} \Pr{|X_1-u| \le t/\nu}
  \le K t/\nu.
\end{align*}
The last inequality follows since the density of $X_1$ is bounded by $K$.
This verifies hypothesis (i) of Corollary~\ref{cor: tensorization}
with $K_1 = K/\nu$. Hypothesis (ii) follows immediately from \eqref{eq: QX hypothesis},
with $K_2 = MK/\nu$. If $M \ge 1/c =: C_0$ then $K_1 \le cK_2$ as required
in Corollary~\ref{cor: tensorization}. It yields
$$
\Pr{\sqrt{Z_1^2 + Z_2^2} \le t \sqrt{d} } \le (MKt/\nu)^d \quad \text{for all } t \ge 0.
$$
This completes the proof.
\end{proof}

\section{Theorem~\ref{thm: dens PX} in higher dimensions: completion of the proof}			\label{s: proof}

Now we are ready to prove Theorem~\ref{thm: dens PX}.
Replacing $X_j$ with $KX_j$ we can assume that $K=1$.
By Proposition~\ref{prop: sbp dens}, it suffices to bound the concentration function as follows:
$$
\LL(PX, t \sqrt{d}) \le (C_1 t)^d, \quad t \ge 0,
$$
where $C_1$ is an absolute constant.
Translating $X$ if necessary, we can reduce the problem to showing that
\begin{equation}         \label{eq: induction}
\Pr{ \|PX\|_2 \le t \sqrt{d} } \le (C_1 t)^d, \quad t \ge 0.
\end{equation}
We will prove this by induction on $d$.
Choose $C_1$ a sufficiently large constant, depending on the value of the constants 
denoted by $C$ in Theorem~\ref{thm: dim one}, Proposition~\ref{prop: sbp dens}, 
Proposition~\ref{prop: Pej small}, 
and the constant denoted by $C_0$ in Proposition~\ref{prop: Pei large}.

For the base of the induction, inequality \eqref{eq: induction} for $d=1$ 
follows from Theorem~\ref{thm: dim one}.

Assume that the statement \eqref{eq: induction} holds in dimension $d-1 \in \N$,
so one has
\begin{equation}         \label{eq: induction hypothesis}
\Pr{ \|QX\|_2 \le t \sqrt{d-1} } \le (C_1 t)^{d-1}, \quad t \ge 0
\end{equation}
for the projection $Q$ onto any $(d-1)$-dimensional subspace of $\R^n$.
We would like to make an induction step, i.e. prove \eqref{eq: induction} in dimension $d$.

If $\|Pe_i\|_2 < 1/2$ for all $i \in [n]$, then \eqref{eq: induction}
follows from Proposition~\ref{prop: Pej small} together with Proposition~\ref{prop: sbp dens}.
Alternatively, if there exists $i \in [n]$ such that $\|Pe_i\|_2 \ge 1/2$,
we can apply Proposition~\ref{prop: Pei large} with $M = C_1/2$.
Moreover, since the rank of $Q$ is $d-1$,
the assumption \eqref{eq: QX hypothesis} is also satisfied due to the induction hypothesis
\eqref{eq: induction hypothesis} and the choice of $M$.
Then an application of Proposition~\ref{prop: Pei large} yields \eqref{eq: induction}.
This completes the proof of Theorem~\ref{thm: dens PX}.
\qed

\section{Theorem~\ref{thm: conc AX}. Concentration of anisotropic distributions.}

In this section we prove a more precise version of Theorem~\ref{thm: conc AX} for 
random vectors of the form $AX$, where $A$ is a fixed $m \times n$ matrix. 

The singular values of $A$ arranged in a non-increasing order are denoted by $s_j(A)$, $j = 1, \ldots, m \wedge n$.
To simplify the notation, we set $s_j(A)=0$ for $j > m \wedge n$, and we will do the same for singular vectors of $A$.

The definition of the {\em stable rank} of $A$ from \eqref{eq: stable rank} reads as 
$$
r(A)= \left \lfloor \frac{\norm{A}_\HS^2}{\norm{A}^2} \right \rfloor 
= \left \lfloor \frac{\sum_{j=1}^\infty s_j(A)^2}{s_1(A)^2} \right \rfloor.
$$
To emphasize the difference between the essential rank and the rank, we set 
\begin{equation}				\label{eq: delta}
\d(A)= \frac{\sum_{j=r(A)+1}^\infty s_j(A)^2}{\sum_{j=1}^\infty s_j(A)^2}.
\end{equation}
Thus $0 \le \d(A) \le 1$.
Note that the numerator in \eqref{eq: delta} 
is the square of the distance from $A$ to the set of matrices of rank at most $r(A)$ 
in the Hilbert-Schmidt metric, while the denominator equals $\|A\|_\HS^2$. 
In particular, $\d(A)=0$ if and only if $A$ is an orthogonal projection up to an isometry;
in this case $r(A) = \rank(A)$. 

\begin{theorem} 		\label{thm: conc AX precise}
  Consider a random vector $X= (X_1,\ldots, X_n)$ where
  $X_i$ are real-valued independent random variables.
  Let $t,p \ge 0$ be such that
  $$
  \LL(X_i, t) \le p \quad \text{for all } i=1,\ldots,n.
  $$ 
  Then for every $m \times n$ matrix $A$ and for every $M \ge 1$ we have
  \begin{equation} \label{i: all delta}
    \LL(AX, Mt\|A\|_\HS) \le \left( CMp/\sqrt{\d(A)} \right)^{r(A)}.
 \end{equation}
 provided $\d(A)>0$. Moreover, if $\d(A) < 0.4$, then
 \begin{equation} \label{i: small delta}
  \LL(AX, Mt\|A\|_\HS) \le \left( CMp \right)^{r_0(A)},
 \end{equation} 
 where $r_0(A)=\lceil (1-2\d(A))r(A) \rceil$. 
\end{theorem}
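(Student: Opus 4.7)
My plan is to deduce both parts of the theorem from Corollary~\ref{cor: conc PX} (in the flexible form of Remark~\ref{rem: flexible scaling}) by applying it to spectral projections built from the SVD $A=\sum_j s_j u_j v_j^\tran$. For any index set $S$, let $Q_S$ denote the orthogonal projection of $\R^n$ onto $\Span\{v_j:j\in S\}$. Orthonormality of the $u_j$ gives
\[
\|AX\|_2^2\ge\sum_{j\in S}s_j^2\<v_j,X\>^2\ge \bigl(\min_{j\in S}s_j\bigr)^2\|Q_SX\|_2^2,
\]
so $\|AX\|_2\le Mt\|A\|_\HS$ forces $\|Q_SX\|_2\le Mt\|A\|_\HS/\min_{j\in S}s_j$. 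Combining this implication with the flexible-scaling form of Corollary~\ref{cor: conc PX} applied to $Q_SX$ gives the master estimate
\begin{equation}\label{eq: master AX}
\LL(AX,Mt\|A\|_\HS)\le \Big(\frac{CMp\,\|A\|_\HS}{(\min_{j\in S}s_j)\sqrt{|S|}}\Big)^{|S|},
\end{equation}
valid for every nonempty $S\subset\{1,\ldots,\rank(A)\}$ (the requirement $M\ge 1$ in Remark~\ref{rem: flexible scaling} is automatic since $(\min_{j\in S}s_j)\sqrt{|S|}\le\|A\|_\HS$).

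For (ii), I take $S=\{1,\ldots,r_0\}$ with $r_0=\lceil(1-2\delta)r\rceil$. The crucial step is the bound $s_{r_0}^2\ge s_1^2/2$, which I obtain by comparing $\sum_{j\le r}s_j^2\ge(1-\delta)rs_1^2$ with the decomposition $\sum_{j\le r}s_j^2\le r_0 s_1^2+(r-r_0)s_{r_0}^2$ and using $(1-\delta)r-r_0\le\delta r$ together with $r-r_0\le 2\delta r$. Substituting into \eqref{eq: master AX}, and using $\|A\|_\HS^2<(r+1)s_1^2$ along with $r/r_0\le 1/(1-2\delta)\le 5$ (valid since $\delta<0.4$), yields the desired $(C_1 Mp)^{r_0}$.

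For (i), I specialize \eqref{eq: master AX} to the dyadic blocks $J_k=\{j:s_j\in(s_1/2^{k+1},s_1/2^k]\}$: writing $d_k=|J_k|$ and $\alpha_k=\sum_{j\in J_k}s_j^2/\|A\|_\HS^2$, the inequality $s_j\le s_1/2^k$ for $j\in J_k$ gives $(s_1/2^k)^2 d_k\ge\alpha_k\|A\|_\HS^2$, so \eqref{eq: master AX} with $S=J_k$ specializes to
\begin{equation}\label{eq: dyadic AX}
\LL(AX,Mt\|A\|_\HS)\le(2CMp/\sqrt{\alpha_k})^{d_k}\qquad\text{for every }k\ge 0.
\end{equation}
Assuming (as I may) $CMp\le c\sqrt{\delta}$ with $c$ sufficiently small -- otherwise the target bound is trivial -- the task reduces to producing a single $k$ for which the right-hand side of \eqref{eq: dyadic AX} is at most $(C'Mp/\sqrt{\delta})^{r}$.

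The \textbf{main obstacle} will be precisely that single step. I plan to handle it by a dichotomy on the size of $rs_r^2$ relative to $\delta\|A\|_\HS^2$. In the ``balanced'' regime $rs_r^2\gtrsim \delta\|A\|_\HS^2$, \eqref{eq: master AX} with $S=\{1,\ldots,r\}$ directly delivers $(CMp/\sqrt{\delta})^r$. In the complementary ``tail-heavy'' regime, most of the excess $\delta\|A\|_\HS^2$ is carried by singular values well below $\sqrt{\delta/r}\,\|A\|_\HS$, which forces some dyadic block $J_{k^*}$ to have $d_{k^*}$ substantially exceeding $r$; the exponential advantage of $d_{k^*}\gg r$ in \eqref{eq: dyadic AX} then absorbs the possibly smaller $\alpha_{k^*}$. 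Making the dichotomy quantitative -- identifying $k^*$ and tracking the absolute constants uniformly across both regimes -- is the technical heart of the argument.
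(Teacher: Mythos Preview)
Your approach to \eqref{i: small delta} is essentially the paper's: show that $s_{r_0}$ is comparable to $s_1$ and apply Corollary~\ref{cor: conc PX} to the projection onto the top $r_0$ right singular directions. (Your stated justification of $s_{r_0}^2\ge s_1^2/2$ via ``$(1-\delta)r-r_0\le\delta r$ together with $r-r_0\le 2\delta r$'' has the first inequality pointing the wrong way for a lower bound on $s_{r_0}$. The paper argues by contradiction with $k=\lfloor(1-2\delta)r\rfloor$, getting $s_{k+1}\ge s_1/2$; since $r_0\le k+1$ this gives $s_{r_0}\ge s_1/2$, which is all you need.)

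Your plan for \eqref{i: all delta}, however, has a real gap: a \emph{single} choice of $S$ in the master estimate cannot succeed in general. Take a matrix whose nonzero singular values carry equal squared mass on each of $N$ consecutive dyadic value-scales, so that every block has $\alpha_k\approx 1/N$; one can arrange this with $\delta(A)$ bounded away from zero (even close to $1$) and in your ``tail-heavy'' regime. Your dyadic bound $(2CMp/\sqrt{\alpha_k})^{d_k}$ then has base $\approx 2CMp\sqrt N$, which exceeds $1$ once $N>1/(2CMp)^2$ --- and since you only assume $CMp\le c\sqrt\delta\le c$, this happens for all $N$ larger than a fixed absolute constant. With base $>1$, the ``exponential advantage of $d_{k^*}\gg r$'' backfires: a large exponent makes the bound worse, not better. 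The same obstruction defeats every $S=\{1,\dots,r'\}$, since for such matrices $(\min_{j\in S}s_j)^2\,|S|\approx\|A\|_\HS^2/N$ uniformly in $r'$. The paper avoids this by partitioning by \emph{index} rather than by value --- spectral projections $P_l$ onto $\Span\{v_j:2^{l-1}r<j\le 2^lr\}$ for $l\ge 1$ and $P_0$ onto the first $r$ directions --- and, crucially, by a \emph{union bound}: from $\|AX\|_2^2\ge\sum_{l\ge 0} s_{2^lr}^2\|P_lX\|_2^2$ and $\sum_{j>r}s_j^2\le\sum_{l\ge 0} 2^l r\, s_{2^lr}^2$ (both by monotonicity), the event $\|AX\|_2<M\sqrt{\delta}\,\|A\|_\HS$ forces $\|P_lX\|_2^2<M^2\cdot 2^l r$ for \emph{some} $l$. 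Corollary~\ref{cor: conc PX} then bounds each such probability by $(C_0Mp)^{\rank P_l}$ with the \emph{same} base $C_0Mp$ for every $l$; since the ranks grow geometrically in $l$, the series sums to at most $(CMp)^r$.
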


\begin{remark}[Orthogonal projections]
  For orthogonal projections we have $\d(A)=0$, $r_0(A) = r(A) = \rank(A)$, 
  so the second part of Theorem~\ref{thm: conc AX precise} recovers 
  Corollary~\ref{cor: conc PX}.
\end{remark}

\begin{remark}[Stability]
  Note the $\lceil \cdot \rceil$ instead of $\lfloor \cdot \rfloor$ 
  in the definition of $r_0(A)$. This offers some stability of the concentration function. 
  Indeed, a small perturbation of a $d$-dimensional 
  orthogonal projection will not change the exponent $r_0(A) = r(A) = d$ 
  in the small ball probability.
\end{remark}

\begin{remark}[Flexible scaling]
  The parameter $M$ offers some flexible scaling, which may be useful in applications. 
  For example, knowing that $\LL(X_i,t)$ are all small, Theorem~\ref{thm: conc AX precise}
  allows one to bound $\LL(AX, 10t\|A\|_\HS)$
  rather than just $\LL(AX, t\|A\|_\HS)$. Note that such result would not trivially follow by applying 
  Remark~\ref{rem: flexible scaling}.
\end{remark}

\medskip

\begin{proof}[Proof of Theorem~\ref{thm: conc AX precise}]
We will first prove an inequality that is more general than \eqref{i: all delta}.
Denote
$$
S_r(A)^2 = \sum_{j=r+1}^\infty s_j(A)^2, \quad r =0, 1, 2, \ldots
$$
Then, for every $r$, we claim that 
\begin{equation}				\label{eq: all delta general}
\LL(AX, M t S_r(A)) \le (CMp)^r.
\end{equation}
This inequality would imply \eqref{i: all delta} by rescaling, since $S_r(A) = \sqrt{\d(A)} \|A\|_\HS$.

\medskip

Before we prove \eqref{eq: all delta general}, let us make some helpful reductions. 
First, by replacing $X$ with $X/t$ we can assume that $t=1$. 
We can also assume that the vector $u$ appearing the 
definition \eqref{eq: concentration function} of the concentration function 
$\LL(AX, M t S_r)$ equals zero; 
this is obvious by first projecting $u$ onto the image of $A$ and then appropriately 
translating $X$. With these reductions, the claim \eqref{eq: all delta general} becomes 
\begin{equation}							\label{eq: all delta general reduced}
\Pr{ \norm{AX}_2 \le M S_r(A) } \le (CMp)^r.
\end{equation}

Let $A= \sum_{j=1}^\infty s_j(A) u_j v_j^\tran$ be the singular value decomposition of $A$.
For $l=0,1,2,\ldots$, consider the spectral projections $P_l$
defined as  
$$
P_0=  \sum_{j=1}^{r} v_j v_j^\tran \quad \text{ and } \quad
P_l= \sum_{j=2^{l-1} r+1}^{2^l r} v_j v_j^\tran, \quad l=1,2,\ldots
$$
Note that $\text{rank}(P_0)=r$ and $\text{rank}(P_l) = 2^{l-1} r$ for $l=1,2,\ldots$

We shall bound $\|AX\|_2$ below and $S_r(A)$ above and then compare the two estimates.
First, using the monotonicity of the singular values, we have
$$
\norm{AX}_2^2
= \sum_{j=1}^\infty s_j(A)^2 \< X, v_j \> ^2
\ge \sum_{l=0}^\infty s_{2^l r}(A)^2 \norm{P_l X}_2^2.
$$
Next, again by monotonicity, 
$$
S_r(A)^2 \le \sum_{l=0}^\infty 2^l r \cdot s_{2^l r}(A)^2. 
$$
Comparing these two estimates term by term, we obtain  
\begin{equation}				\label{eq: Sr sum}
\Pr{ \norm{AX}_2 < M S_r(A) }
\le \sum_{l=0}^\infty \Pr{ \norm{P_l X}_2^2 < M^2 \, 2^l r }.
\end{equation}
Applying Corollary~\ref{cor: conc PX} (see Remark~\ref{rem: flexible scaling})
and noting that $2^l r\le 2 \rank(P_l)$, we find that
\begin{equation}				\label{eq: PIX}
\Pr{ \norm{P_l X}_2^2 < M^2 \, 2^l r } \le (C_0Mp)^{2^l r}, \quad l=0,1,2,\ldots
\end{equation}
where $C_0$ is an absolute constant.
We will shortly conclude that \eqref{eq: all delta general reduced} holds with $C = 10 C_0$.
Without loss of generality we can assume that $CMp \le 1$, so  $C_0Mp \le 1/10$.
Thus upon substituting \eqref{eq: PIX} into \eqref{eq: Sr sum} we obtain a convergent series
whose sum is bounded by $(10 C_0 Mp)^r$. This proves \eqref{eq: all delta general reduced}.

\medskip

We now turn to proving \eqref{i: small delta}. 
By replacing $X$ with $X/t$ and $A$ with $A/\|A\|$ we can assume that $t=1$ and $\|A\|=1$,
and reduce our task to showing that  
\begin{equation}							\label{eq: small delta reduced}
\Pr{ \|AX\|_2 \le M\|A\|_\HS } \le (CMp)^{r_0(A)}.
\end{equation}

For shortness, denote $r = r(A) = \lfloor \|A\|_\HS^2 \rfloor$ and $\d = \d(A)$. 
Set $k = \lfloor (1-2\d)r \rfloor$. We claim that 
\begin{equation}				\label{eq: claim sk}
s_{k+1}(A) \ge \frac{1}{2}.
\end{equation}
Assume the contrary. By definition of $\d$ and $r$, we have
\begin{equation}				\label{eq: sum lower}
\sum_{j=1}^r s_j(A)^2 = (1-\d) \|A\|_\HS^2 \ge (1-\d) r.
\end{equation}
On the other hand, by our assumption and monotonicity, 
$s_j(A)$ are bounded by $\|A\|=1$ for all $j$, and by $1/2$ for $j \ge k+1$. Thus
\begin{equation}				\label{eq: sum above}
\sum_{j=1}^r s_j(A)^2 \le \sum_{j=1}^k 1^2 + \sum_{j=k+1}^r \Big( \frac{1}{2} \Big)^2
= k + (r-k) \frac{1}{4}.
\end{equation}
Since the expression in the right hand side increases in $k$ and $k \le (1-2\d)r$, 
we can further bound the sum in \eqref{eq: sum above} by $(1-2\d) r + 2\d r \cdot \frac{1}{4}$.
But this is smaller than $(1-\d)r$, the lower bound for the same sum in \eqref{eq: sum lower}.
This contradiction establishes our claim \eqref{eq: claim sk}.

Similarly to the first part of the proof, we shall bound $\|AX\|_2$ below and $S_r(A)$ above 
and then compare the two estimates. For the lower bound, we consider the spectral projection 
$$
Q_{k+1} = \sum_{j=1}^{k+1} v_j v_j^\tran
$$
and using \eqref{eq: claim sk}, we bound
$$
\|AX\|_2^2 
\ge s_{k+1}(A)^2 \|Q_{k+1}X\|_2^2 
\ge \frac{1}{4} \|Q_{k+1}X\|_2^2.
$$
For the upper bound, we note that since $\d \le 0.4$ by assumption 
and $r = \lfloor \|A\|_\HS^2 \rfloor \ge 1$, we have
$$
\|A\|_\HS^2 \le 2r \le 10(1-2\d)r \le 10(k+1)
$$
Applying Corollary~\ref{cor: conc PX} (see Remark~\ref{rem: flexible scaling}) and recalling that
$\rank(Q_k) = k+1$, we conclude that
$$
\Pr{ \|AX\|_2 \le M\|A\|_\HS } 
\le \Pr{ \|Q_{k+1}X\|_2^2 \le 40 M^2 (k+1) } 
\le (CMp)^{k+1}.
$$
It remains to note that $k+1 \ge r_0(A)$ by definition. 
This establishes \eqref{eq: small delta reduced} whenever $CMp \le 1$.
The case when $CMp >1$ is trivial. 
Theorem~\ref{thm: conc AX precise} is proved.
\end{proof}

Theorem~\ref{thm: conc AX precise} implies the following more precise version 
of Theorem~\ref{thm: conc AX}.

\begin{corollary} \label{cor: small ball}
  Consider a random vector $X= (X_1,\ldots, X_n)$ where
  $X_i$ are real-valued independent random variables.
  Let $t,p \ge 0$ be such that
  $$
  \LL(X_i, t) \le p \quad \text{for all } i=1,\ldots,n.
  $$ 
  Then for every $m \times n$ matrix $A$, every $M \ge 1$ and every $\e \in (0,1)$ we have
  $$
  \LL(AX, Mt\|A\|_\HS) \le \left( C_\e M p \right)^{\lceil (1-\e) r(A)\rceil}.
  $$
  Here $C_\e = C/\sqrt{\e}$ and $C$ is an absolute constant.
\end{corollary}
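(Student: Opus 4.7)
The plan is to derive Corollary~\ref{cor: small ball} directly from the two bounds \eqref{i: all delta} and \eqref{i: small delta} of Theorem~\ref{thm: conc AX precise} by a case analysis based on the size of $\d(A)$ relative to $\e$. The choice of threshold should be $\d(A) \approx \e/4$: in one regime the factor $1/\sqrt{\d(A)}$ appearing in \eqref{i: all delta} is controlled by $C/\sqrt{\e}$, and in the other regime the exponent $r_0(A) = \lceil(1-2\d(A))r(A)\rceil$ in \eqref{i: small delta} is automatically at least $\lceil(1-\e)r(A)\rceil$.

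Concretely, I would first set $C_\e = C/\sqrt{\e}$ with $C$ a sufficiently large absolute constant, and observe that the statement is trivial when $C_\e Mp \ge 1$, so one may assume $C_\e Mp < 1$. Then split into two cases. In the first case $\d(A) \ge \e/4$ (so in particular $\d(A) > 0$), apply \eqref{i: all delta} to get
$$
\LL(AX, Mt\|A\|_\HS) \le \bigl(CMp/\sqrt{\d(A)}\bigr)^{r(A)} \le \bigl(2C Mp/\sqrt{\e}\bigr)^{r(A)}.
$$
Since $C_\e Mp < 1$ and $r(A) \ge \lceil(1-\e)r(A)\rceil$, raising the base (which is less than $1$) to the smaller exponent only increases it, so the right hand side is at most $(C_\e Mp)^{\lceil(1-\e)r(A)\rceil}$ after adjusting $C$.

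In the second case $\d(A) < \e/4$, note that $\e/4 < 0.4$ since $\e < 1$, so the hypothesis of \eqref{i: small delta} is satisfied. Apply it to obtain $\LL(AX, Mt\|A\|_\HS) \le (CMp)^{r_0(A)}$. Because $2\d(A) < \e/2 < \e$, one has $r_0(A) = \lceil(1-2\d(A))r(A)\rceil \ge \lceil(1-\e)r(A)\rceil$; and since $CMp \le C_\e Mp < 1$, lowering the exponent to $\lceil(1-\e)r(A)\rceil$ only enlarges the quantity, yielding the desired bound.

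There is no real obstacle here beyond the bookkeeping of constants; the only point to watch is that the threshold $\e/4$ must simultaneously (a) keep $\sqrt{\d(A)} \gtrsim \sqrt{\e}$ in the first case, (b) ensure $\d(A) < 0.4$ in the second case so that \eqref{i: small delta} applies, and (c) make $1 - 2\d(A) \ge 1 - \e$ in the second case. Taking $\e/4$ (or any threshold comfortably below $\min(\e/2, 0.4)$) accomplishes all three. Finally, the dependence $C_\e = C/\sqrt{\e}$ propagates cleanly from the $1/\sqrt{\d(A)}$ factor in \eqref{i: all delta}, matching the claimed constant in the corollary.
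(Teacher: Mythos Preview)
Your proposal is correct and follows essentially the same approach as the paper: a case split on $\d(A)$, invoking \eqref{i: all delta} in the large-$\d(A)$ regime and \eqref{i: small delta} in the small-$\d(A)$ regime. The paper uses the threshold $\e/2$ rather than your $\e/4$, but this is immaterial (and your choice is slightly cleaner, since $\e/4 < 0.4$ holds for all $\e \in (0,1)$, guaranteeing the hypothesis of \eqref{i: small delta} without further comment).
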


\begin{proof}
The result follows from Theorem~\ref{thm: conc AX precise} where we apply estimate \eqref{i: all delta} 
whenever $\d(A) \ge \e/2$ and \eqref{i: small delta} otherwise. 
\end{proof}

In the case when the densities of $X_i$ are uniformly bounded by $K>0$, Corollary~\ref{cor: small ball} yields
\begin{equation}  \label{eq: cont density}
  \LL(AX, t \|A\|_\HS) \le \left( \frac{CKt}{\sqrt{\e}} \right)^{\lceil (1-\e) r(A)\rceil} \quad \text{for all } t,\e>0.
\end{equation}
Applying this estimate with $\e= \frac{1}{2 r(A)}$ and $\tau= t \sqrt{r(A)}$, we derive  a bound on the Levy concentration function, which is lossless in terms of power of the small ball radius. Such bound may be useful for estimating the negative moments of the norm of $AX$.

\begin{corollary} \label{cor: small ball-2}
 Let $X=(X_1 \etc X_n)$ be a vector with independent random coordinates. Assume that the densities on $X_1, \ldots, X_n$ are bounded by $K$.
 Let $A$ be an $m \times n$ matrix.
 Then for any  $\tau>0$,
 \[
   \LL(AX, \tau \norm{A}) \le \left(CK \tau \right)^{ r(A)}.
 \]
\end{corollary}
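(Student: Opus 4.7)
The plan is to obtain Corollary \ref{cor: small ball-2} by specializing inequality \eqref{eq: cont density} with a carefully chosen $\varepsilon$, following the hint in the paragraph preceding the statement. The key preliminary observation is that the definition of stable rank as a floor gives $r(A)\le \|A\|_\HS^2/\|A\|^2$, hence
\[
\|A\|_\HS \ge \sqrt{r(A)}\,\|A\|,
\]
so the ball of radius $\tau\|A\|$ is contained in the ball of radius $\tau\|A\|_\HS/\sqrt{r(A)}$ around the same center. By monotonicity of $\LL(AX,\cdot)$ it therefore suffices to bound $\LL\bigl(AX,\tau\|A\|_\HS/\sqrt{r(A)}\bigr)$. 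This reduction is what makes \eqref{eq: cont density} applicable, since that inequality is scaled by $\|A\|_\HS$ rather than by $\|A\|$.

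Next I would apply \eqref{eq: cont density} with the parameters $t=\tau/\sqrt{r(A)}$ and $\varepsilon=1/(2r(A))$. With these choices the base of the power simplifies cleanly:
\[
\frac{CKt}{\sqrt{\varepsilon}}
= \frac{CK\tau}{\sqrt{r(A)}}\cdot\sqrt{2r(A)}
= \sqrt{2}\,CK\tau,
\]
which absorbs into a new absolute constant $C'K\tau$. Combining this with the reduction in the first paragraph then gives
\[
\LL(AX,\tau\|A\|)\le \bigl(C'K\tau\bigr)^{\lceil(1-1/(2r(A)))\,r(A)\rceil}.
\]

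The one point that requires attention is verifying the exponent is exactly $r(A)$, so that no power is lost. Since $r(A)$ is by definition a positive integer,
\[
\bigl(1-\tfrac{1}{2r(A)}\bigr)\,r(A)\;=\;r(A)-\tfrac{1}{2},
\]
and $\lceil r(A)-1/2\rceil=r(A)$. This is precisely where the $\lceil\cdot\rceil$ version of the exponent in Corollary \ref{cor: small ball} (rather than $\lfloor\cdot\rfloor$) is essential, and is the reason the author introduced the ceiling convention earlier. I do not expect any real obstacle beyond this bookkeeping; the only small caveats are the trivial cases $A=0$ (then $AX=0$ and the bound holds automatically when the right-hand side is at least $1$) and the range $CK\tau>1$ (where the estimate is vacuous because concentration functions are bounded by $1$), both of which let one assume $r(A)\ge 1$ and $CK\tau\le 1$ at the outset.
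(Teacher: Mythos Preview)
Your proposal is correct and follows exactly the route the paper indicates in the sentence preceding the corollary: apply \eqref{eq: cont density} with $\varepsilon=1/(2r(A))$ and $t=\tau/\sqrt{r(A)}$, using $\|A\|_\HS\ge\sqrt{r(A)}\,\|A\|$ for the monotonicity reduction and the ceiling in the exponent to recover $r(A)$ exactly. Your write-up just makes explicit the details the paper leaves to the reader.
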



\begin{thebibliography}{99}

\bibitem{ABVV} A. Giannopoulos, S. Brazitikos, P. Valettas and B-H. Vritsiou, 
{\em Geometry of isotropic convex bodies.} 
Mathematical Surveys and Monographs, 196. 
American Mathematical Society, Providence, RI, 2014. 

\bibitem{Ball 86} K. Ball,
{\em Cube slicing in $\R^n$},
Proc. Amer. Math. Soc. 97 (1986), 465--473.

\bibitem{Ball 89} K. Ball,
{\em Volumes of sections of cubes and related problems}.
In: J. Lindenstrauss and V. D. Milman.
Geometric aspects of functional analysis (1987--88).
Lecture Notes in Math., Vol. 1376 (1989).
Springer, Berlin. pp. 251--260

\bibitem{BN} K. Ball, F. Nazarov,
{\em Little level theorem and zero-Khinchin inequality},
unpublished, 1996.
\verb=http://www.math.msu.edu/~fedja/prepr.html=

\bibitem{BL} H. J. Brascamp, E. H. Lieb,
{\em Best constants in Young's inequality, its converse, and its generalization to more than three functions},
Advances in Math. 20 (1976), 151--173

\bibitem{Esseen} C. G. Esseen,
{\em On the Kolmogorov-Rogozin inequality for the concentration function},
Z. Wahrscheinlichkeitstheorie und Verw. Gebiete 5 (1966), 210--216.

\bibitem{Halasz 75} G. Hal\'asz, 
{\em On the distribution of additive arithmetic functions}, 
Acta Arith. 27 (1975), 143--152

\bibitem{Halasz 77} G. Hal\'asz,
{\em Estimates for the concentration function of combinatorial number theory and probability}, 
{\em Periodica Mathematica Hungarica} 8 (1977), 197--211.

\bibitem{Kolm} A. Kolmogorov,
{\em Sur les propri\'et\'es des fonctions de concentrations de M. P. L\'evy},
Ann. Inst. H. Poincar\'e 16 (1958), 27--34.

\bibitem{Levy} P. L\'evy,
{\em Th\'eorie de l'addition des variables al\'eatoires}, Gauthier-Villars, 1937.

\bibitem{Li-Shao} W. V. Li, Q.-M. Shao, 
{\em Gaussian processes: inequalities, small ball probabilities and applications.} 
Stochastic processes: theory and methods, 533Ð597, Handbook of Statist., 19, 
North-Holland, Amsterdam, 2001.

\bibitem{LL} E. Lieb, M. Loss,
{\em Analysis.} 
Second edition. Graduate Studies in Mathematics, 14. 
American Mathematical Society, Providence, RI, 2001. 

\bibitem{Paouris} G. Paouris, 
{\em Small ball probability estimates for log-concave measures},
Trans. Amer. Math. Soc. (2012), 287--308.

\bibitem{Pisier} G. Pisier,
{\em The volume of convex bodies and Banach space geometry.} 
Cambridge Tracts in Mathematics, 94. 
Cambridge University Press, Cambridge, 1989.

\bibitem{Rogozin 61} B. A. Rogozin, 
  {\em On the increase of dispersion of sums of independent random variables},
  Teor. Verojatnost. i Primenen. 6 (1961), 106--108.
  
\bibitem{Rogozin} B. A. Rogozin,
{\em An estimate for the maximum of the convolution of bounded densities.}
(Russian) Teor. Veroyatnost. i Primenen. 32 (1987), 53--61.

\bibitem{RV rectangular} M. Rudelson, R. Vershynin, 
{\em Smallest singular value of a random rectangular matrix}, 
Communications on Pure and Applied Mathematics 62 (2009), 1707--1739.

\bibitem{RV ICM} M. Rudelson, R. Vershynin, 
{\em Non-asymptotic theory of random matrices: extreme singular values}, 
Proceedings of the International Congress of Mathematicians. 
Volume III, 1576--1602, Hindustan Book Agency, New Delhi, 2010.

\bibitem{RV HW} M. Rudelson, R. Vershynin, 
{\em Hanson-Wright inequality and sub-gaussian concentration}, 
Electronic Communications in Probability 18 (2013), 1--9.

\bibitem{TV} T. Tao, V. Vu,
{\em From the Littlewood-Offord problem to the circular law: universality of the spectral distribution of random matrices},
Bull. Amer. Math. Soc. 46 (2009), 377--396.

\bibitem{V tutorial} R. Vershynin, 
{\em Introduction to the non-asymptotic analysis of random matrices},
Compressed sensing, 210--268, Cambridge Univ. Press, Cambridge, 2012.

\end{thebibliography}
\end{document}